\def\g{{\rm{g}}}
\newtheorem{theorem}{Theorem}
\newtheorem{corollary}[theorem]{Corollary}
\newtheorem{lemma}[theorem]{Lemma}
\newtheorem{proposition}[theorem]{Proposition}
\theoremstyle{definition}
\newtheorem{example}[theorem]{Example}
\newtheorem{algorithm}[theorem]{Algorithm}
\def\max{{\rm max}}
\def\min{{\rm min}}
\def\msg{{\rm msg}}
\def\m{{\rm{m}}}
\def\e{{\rm{e}}}
\def\e{{\rm{e}}}
\def\g{{\rm{g}}}
\def\n{{\rm{n}}}
\def\F{{\rm{F}}}
\def\N{{\mathbb N}}
\def\Z{{\mathbb Z}}
\def\F{{\rm F}}
\begin{document}

\title[Numerical semigroups with concentration two]{Numerical semigroups with concentration two} 

\author{José C. Rosales}
\address{Departamento de \'Algebra, Universidad de Granada, E-18071 Granada, Spain}
\email{jrosales@ugr.es}

\author{Manuel B. Branco}
\address{Departamento de Matemática, Universidade de Évora, 7000 Évora, Portugal}
\email{mbb@uevora.pt}

\author{Márcio A. Traesel}
\address{Departamento de Matemática, Instituto Federal de São Paulo, Caraguatatuba, SP, Brazil}
\email{marciotraesel@ifsp.edu.br}

\thanks{The first author was partially supported by MTM-2017-84890-P and by Junta de Andalucia group FQM-343. The second author is supported by the project FCT PTDC/MAT/73544/2006). 2010 Mathematics Subject Classification: 20M14, 11D07.}

\begin{abstract}
We define the concentration of a numerical semigroup $S$ as $\mathsf{C}(S)=\max \left\{\text{next}_S(s)-s ~|~ s\in S \backslash \{0\}\right\}$ wherein  $\text{next}_S(s)=\min\left\{x \in S ~|~ s<x\right\}$.
In this paper, we study  the class of numerical semigroups with concentration $2$. We give algorithms to calculate the whole set of this class of semigroups with given multiplicity, genus or Frobenius number.  Separately, we prove that this class of  semigroups verifies the Wilf's conjecture.
\end{abstract}

\keywords{ Numerical semigroup, concentration, Frobenius number, genus, multiplicity, Wilf's conjecture.}

\maketitle

\section{Introduction}\label{S1}

Let $\Z$ be the set of integers an let $\N=\left\{n\in \Z ~|~ n\geq 0\right\}$ the set of nonnegative integers. A submonoid of  $(\N, +)$  is a subset of $\N$ closed addition and containing $0$.  A numerical semigroup is  a submonoid $S$ of $(\N, +)$  such that $\N\backslash S=\left\{n\in\N ~|~ n\not\in S\right\}$  is  finite.

If  $S$ numerical semigroup and $s$ an element in  $S$,  we denote by $\text{next}_S(s)=\min\left\{x \in S ~|~ s<x\right\}$.   We define the concentration of a numerical semigroup $S$ as $\mathsf{C}(S)=\max \left\{\text{next}_S(s)-s ~|~ s\in S \backslash \{0\}\right\}$. The least nonnegative integer belonging to $S$ is called the multiplicity, denoted by $\m(S)$. 
Clearly, we have that if $S$ is a numerical semigroup with concentration $1$ then $S=\left\{0,\m(S), \rightarrow \right\}$. 
 If $m$ is a positive integer, then the semigroup $\left\{0,\m,\rightarrow \right\}$ is denoted here by $\triangle(m)$ and it is called half-line or ordinary.

Our aim in this paper is the study the numerical semigroups with concentration $2$.  

If $\mathcal  X$  is a nonempty subset of   $\N$, we denote by $\left\langle \mathcal X\right\rangle$ the submonoid of $(\N,+)$ generated by $\mathcal X$, that is, 
\[\left\langle \mathcal X\right\rangle=\left\{\sum_{i=1}^n \lambda_i\,x_i ~|~ n\in\N\backslash\left\{0\right\}, ~ x_1,\ldots , x_n\in \mathcal X,\text{and} ~ \lambda_1,\ldots, \lambda_n\in \N  \right\},\]

which is a numerical semigroup if and only if $\gcd(\mathcal X)=1$  (see \cite{libro}).

If $M$ is a submonoid  of $(\N,+)$ and $M=\left\langle \mathcal X\right\rangle$ then we say that $\mathcal X$ is a system of generators of $M$. Moreover, if $M\neq \left\langle \mathcal Y\right\rangle$ for all $\mathcal Y \varsubsetneq \mathcal X$, then  we say that $\mathcal X$ is a minimal system of generators of $S$. 
In  \cite[Corollary 2.8]{libro} it is shown that every submonoid of $(\N,+)$ has a unique minimal system of generators, which is finite. We denote by $\msg (M)$ the minimal systemm of generators of $M$, its cardinality is called the embedding dimension of $M$ and it is denoted by $\e(M)$.

This paper is organized as follows. In Section $2$ we give a characterization of numerical semigroups with concentration $2$ in terms of its minimal system of generators.
If $m\in \N\backslash\{0,1\}$ we denote by $\mathsf{C}_2[m]$ the set of all numerical semigroups with concentration $2$ and  multiplicity $m$,  that is, 
\[\mathsf{C}_2[m]= \{ S ~|~S \text{ is a numerical semigroup}, \mathsf{C}(S)=2 ~\text{and}~ \m(S)=m\}.\]

In this section we will order the elements $\mathsf{C}_2[m]$ making a rooted tree. This ordering will provide  us an algorithmic procedure that allows us to recurrently build the elements $\mathsf{C}_2[m]$.

Let $S$ be a numerical semigroup. As  $\N\backslash S$ is finite,  there exist integers  $\F(S)=\max \left\{z\in \Z ~|~ z\not\in S\right\}$ and   the cardinality  of $\N\backslash S$ denoted by $\g(S)$,  which are  two  important invariants of $S$ called Frobenius number and genus of $S$, respectively. See  for instance \cite{alfonsin} and \cite{barucci} to understand the importance of the study of these invariants.

We started section $3$ by seeing that $\mathsf{C}_2[m]$ is a finite set if and only if $m$ is odd. Besides we give an algorithm that allows us  compute all elements of $\mathsf{C}_2[m]$ with a given genus.

Given $S$ a numerical semigroup, we denote by $N(S)=\left\{s\in S ~|~ s<\F(S)\right\}$ and its cardinality is denoted by $\n(S)$.

In $1978$, Wilf conjectured (see \cite{wilf}) that if $S$ is a numerical semigroup  then $\g(S)\leq (\e(S)-1) \n(S)$. This question is still widely open and it is  one of the most important problems in numerical semigroups theory. A very good source of the state of the art of this problem  is  \cite{delgado}.  Our aim in  section $4$ will be to prove that  numerical semigroups  with concentration $2$  verify the Wilf's conjecture.

By using the  terminology of \cite{pacific}, a numerical semigroup is irreducible if it cannot be expressed as the intersection of two numerical semigroups properly containing it. A numerical semigroup is a symmetric numerical semigroup (pseudo-symmetric, resp.) if is irreducible and its Frobenius number is odd (even, resp). 
This class of numerical semigroups are probably the numerical semigroups that have been more studied in the literature (see \cite {kunz} and \cite {barucci}). 

Given a positive integer $F$, denote by 
\[\mathsf{C}_2(F)= \{ S ~|~S \text{ is a numerical semigroup}, \mathsf{C}(S)=2 ~\text{and} ~ \F(S)=F\}\]
\[\text{and} ~  I\big(\mathsf{C}_2(F)\big)= \{ S\in \mathsf{C}_2(F)~|~ S ~ \text{is a irreducible numerical semigroup}\}.\]

In section $5$ we define an equivalence relation $\sim$ over $\mathsf{C}_2(F)$ such that
 $\mathsf{C}_2(F)/\! \sim=\left\{[S]~| ~ S\in I\big(\mathsf{C}_2(F)\big)\right\}$ where [S] denotes the equivalence class of S
with respect to $\sim$. 
Hence, to compute all the elements in  $\mathsf{C}_2(F)$ it is enough to determine all  elements in $I\big(\mathsf{C}_2(F)\big)$ and, for each $S\in I\big(\mathsf{C}_2(F)\big)$, to compute the class $[S]$. As a consequence of this study we give an algorithm that allows us to calculate the whole set of $\mathsf{C}_2(F)$.


\section{The tree associated to  $\mathsf{C}_2[m]$}\label{S2}

We started this section by presenting several characterizations for the  numerical semigroups with concentration $2$.

\begin{proposition}\label{1}
Let $S$ be a numerical semigroup such that  $S$ is not half-line. The following conditions are equivalent:
\begin{enumerate}
\item $\mathsf{C}(S)=2$.
\item  $h+1\in S$  for all $h\in \N\backslash S$ such that $h>\m(S)$.
\item  $\left\{s+1, s+2\right\}\cap S\neq \emptyset$ for all $s\in S\backslash \{0\}$.
\item  $\left\{x+1, x+2\right\}\cap S\neq \emptyset$ for all $x\in\msg (S)$.
\end{enumerate}
\end{proposition}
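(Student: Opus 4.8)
The plan is to establish the cyclic chain $(1)\Rightarrow(2)\Rightarrow(3)\Rightarrow(4)\Rightarrow(1)$. Throughout I will use the observation already recorded in the introduction that a numerical semigroup has concentration $1$ precisely when it is a half-line; hence under the standing hypothesis $\mathsf{C}(S)\geq 2$ automatically, and the content of $(1)$ is really the single inequality $\mathsf{C}(S)\leq 2$. I will also repeatedly use that every $s\in S\setminus\{0\}$ satisfies $s\geq\m(S)$, so that $s+1>\m(S)$.

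For $(1)\Rightarrow(2)$ I would argue by a squeeze: given $h\in\N\setminus S$ with $h>\m(S)$, set $s=\max\{x\in S\mid x<h\}$, which exists (for instance $\m(S)$ lies in the set) and satisfies $s\geq\m(S)$, so $s\in S\setminus\{0\}$. Since $h\notin S$ and no element of $S$ lies strictly between $s$ and $h$, we have $\text{next}_S(s)\geq h+1$; on the other hand $\text{next}_S(s)-s\leq\mathsf{C}(S)=2$ together with $s\leq h-1$ forces $\text{next}_S(s)\leq h+1$. Hence $\text{next}_S(s)=h+1\in S$. For $(2)\Rightarrow(3)$, take $s\in S\setminus\{0\}$; then $s+1>\m(S)$, so either $s+1\in S$, or $s+1\in\N\setminus S$ and $(2)$ applied with $h=s+1$ gives $s+2\in S$.

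The implication $(3)\Rightarrow(4)$ is immediate since $\msg(S)\subseteq S\setminus\{0\}$. For $(4)\Rightarrow(1)$ I would first recover condition $(3)$: any $s\in S\setminus\{0\}$ can be written $s=x+s'$ with $x\in\msg(S)$ and $s'\in S$, and $(4)$ gives $x+1\in S$ or $x+2\in S$, so adding $s'$ yields $s+1\in S$ or $s+2\in S$. Then $(3)$ says $\text{next}_S(s)\leq s+2$ for every nonzero $s\in S$, i.e. $\mathsf{C}(S)\leq 2$, which combined with $\mathsf{C}(S)\geq 2$ gives $\mathsf{C}(S)=2$. The only non-formal step is this last passage from the generators to all elements of $S$ (and, to a much lesser extent, keeping the chain of inequalities straight in the squeeze for $(1)\Rightarrow(2)$); everything else is just unwinding the definition of $\mathsf{C}(S)$. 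One could equally organize the proof as the three separate equivalences $(1)\Leftrightarrow(3)$, $(2)\Leftrightarrow(3)$ and $(3)\Leftrightarrow(4)$, with no change in substance.
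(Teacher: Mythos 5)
Your proposal is correct and follows essentially the same route as the paper: the cyclic chain $(1)\Rightarrow(2)\Rightarrow(3)\Rightarrow(4)\Rightarrow(1)$, with the same squeeze argument for $(1)\Rightarrow(2)$ and the same substitution of one minimal generator by $x+1$ or $x+2$ for $(4)\Rightarrow(1)$. Your explicit remark that the standing hypothesis forces $\mathsf{C}(S)\geq 2$, so that only $\mathsf{C}(S)\leq 2$ needs proving in $(4)\Rightarrow(1)$, is a small but welcome clarification that the paper leaves implicit.
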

\begin{proof}
 $1) ~ implies ~ 2).$ Let $s\in S$ such that $s< h <  \text{next}_S(s)$. Since $s\neq 0$, we have that that $h> \m(S)$ and thus  $\text{next}_S(s)-s\leq 2$. Hence $h+1= \text{next}_S(s)\in S$.

 $2) ~ implies ~ 3).$ If $s+1\in\N\backslash S$  and $s+1> \m(S)$, then by $2)$, we conclude that $s+2\in S$.

$3)~ implies ~4).$ Trivial.

$4)~ implies ~1).$ Suppose that $\msg (S)=\left\{n_1,n_2,\ldots ,n_e\right\}$. If $s\in S\backslash \{0\}$, then there exists 
$(\lambda_1,\ldots, \lambda_e)\in \N^{e}\backslash \{(0,\ldots,0)\}$ such that  $s=\lambda_1 n_1+\cdots + \lambda_e n_e$. Let $\lambda_i\neq 0$ with $i\in \left\{1,\ldots, e\right\}$. As by hypothesis $\left\{n_i+1, n_i+2\right\}\cap S\neq \emptyset$,  if $n_i+1\in S$ then $s+1=\lambda_1 n_1+\cdots +(\lambda_i-1)n_i+\cdots + \lambda_e n_e+n_i+1$ and thus $s+1\in S$. In the same way, if $n_i+2\in S$ we obtain that $s+2\in S$. Hence $\text{next}_S(s)-s\leq 2$, that is, $\mathsf{C}(S)=2$.

\end{proof}

\begin{example}\label{2}
Using the previous proposition we deduce that $S=\left\langle 5,7,9\right\rangle$ is a numerical semigroup with $\mathsf{C}(S)=2$, because $\left\{5+2, 7+2, 9+1\right\}\subseteq S$. 
\end{example}

Given  $m$ belongs to $\N \backslash\{0,1\}$, we denote by \\
 $\mathsf{C}_2[m]=\{ S ~|~S \text{ is a numerical semigroup}, \mathsf{C}(S)=2 ~\text{and}~ \m(S)=m\}$ and \\
 $\overline {\mathsf{C}_2[m]}= \{ S ~|~S \text{ is a numerical semigroup},  \mathsf{C}(S)\leq 2 ~\text{and}~ \m(S)=m\}$,

The next result  characterize the set $\overline{\mathsf{C}_2[m]}$ and it has an immediate prove. 

\begin{proposition}\label{3}
If $m\in \N\backslash \{0,1\}$, then $\overline {\mathsf{C}_2[m]}= \mathsf{C}_2[m] \cup \left\{\triangle (m)\right\}$.
\end{proposition}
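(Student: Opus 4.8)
The plan is to show the two inclusions separately, using the characterization of concentration-two semigroups from Proposition~\ref{1} and the trivial observation that $\triangle(m)$ has concentration $1$.

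First I would prove $\overline{\mathsf{C}_2[m]} \subseteq \mathsf{C}_2[m] \cup \{\triangle(m)\}$. Let $S \in \overline{\mathsf{C}_2[m]}$, so $\m(S)=m$ and $\mathsf{C}(S)\leq 2$. By definition $\mathsf{C}(S)$ is a positive integer (since $S$ is infinite and $S\neq\{0\}$), so either $\mathsf{C}(S)=1$ or $\mathsf{C}(S)=2$. If $\mathsf{C}(S)=2$ then $S\in\mathsf{C}_2[m]$ by definition. If $\mathsf{C}(S)=1$, then as noted in the introduction every element of $S\setminus\{0\}$ is followed immediately by another element of $S$, which forces $S=\{0,\m(S),\rightarrow\}=\triangle(m)$. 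Hence $S$ lies in the asserted union.

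Next I would prove the reverse inclusion $\mathsf{C}_2[m] \cup \{\triangle(m)\} \subseteq \overline{\mathsf{C}_2[m]}$. This is immediate: if $S\in\mathsf{C}_2[m]$ then $\m(S)=m$ and $\mathsf{C}(S)=2\leq 2$, so $S\in\overline{\mathsf{C}_2[m]}$; and $\triangle(m)$ has multiplicity $m$ and concentration $1\leq 2$, so $\triangle(m)\in\overline{\mathsf{C}_2[m]}$ as well.

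There is no real obstacle here; the only point requiring a moment's care is the dichotomy $\mathsf{C}(S)\in\{1,2\}$ when $\mathsf{C}(S)\leq 2$, i.e.\ ruling out $\mathsf{C}(S)=0$, which holds because $S$ contains elements larger than any fixed $s\in S\setminus\{0\}$, so $\text{next}_S(s)$ is always defined and strictly greater than $s$. Everything else is unwinding definitions, which is why the authors describe the proof as immediate.
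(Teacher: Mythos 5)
Your proof is correct and matches the paper's intent: the authors state the result with ``an immediate prove'' and give no argument, and your unwinding of the definitions (together with the observation from the introduction that concentration $1$ forces $S=\triangle(m)$) is exactly the intended justification. Nothing further is needed.
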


From this result it is easy to prove.

\begin{lemma}\label{4}
If $m\in \N\backslash \{0,1\}$ and $S\in \mathsf{C}_2[m]$, then  $S\cup \{F(S)\}\in \overline {\mathsf{C}_2[m]}$.
\end{lemma}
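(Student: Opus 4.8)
The claim is that adjoining the Frobenius number $F(S)$ to a semigroup $S \in \mathsf{C}_2[m]$ yields a semigroup that is still in $\overline{\mathsf{C}_2[m]}$. I would proceed in three stages. First, I would verify that $T := S \cup \{F(S)\}$ is a numerical semigroup. It is standard (and immediate) that adjoining the Frobenius number to any numerical semigroup gives a numerical semigroup: the only thing to check is closure under addition, and for $t_1, t_2 \in T$ with at least one equal to $F := F(S)$, the sum $t_1 + t_2 > F$, hence lies in $S \subseteq T$; sums of two elements of $S$ stay in $S$. Also $F(S) > m = \m(S)$ since $S$ is not a half-line (it has concentration $2$), so $\m(T) = \m(S) = m$ is preserved.

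Second, I would check that $\mathsf{C}(T) \le 2$, using the characterization in Proposition \ref{1}, specifically condition $(2)$ adapted to $\overline{\mathsf{C}_2[m]}$: I must show $h + 1 \in T$ for all $h \in \N \setminus T$ with $h > \m(T) = m$. Take such an $h$; then $h \in \N \setminus S$ as well (since $\N \setminus T = (\N \setminus S) \setminus \{F\}$) and $h > m$, so by Proposition \ref{1}(2) applied to $S$ we get $h + 1 \in S \subseteq T$. (If $T$ happens to be a half-line, then $T = \triangle(m) \in \overline{\mathsf{C}_2[m]}$ trivially and there is nothing to check; otherwise the condition above plus Proposition \ref{1} gives $\mathsf{C}(T) = 2$.) Combining with Proposition \ref{3}, $T \in \mathsf{C}_2[m] \cup \{\triangle(m)\} = \overline{\mathsf{C}_2[m]}$.

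The only genuinely delicate point is that the natural statement of Proposition \ref{1}(2) was phrased for semigroups of concentration exactly $2$ (equivalently, non-half-lines), so I should be slightly careful when $T$ is itself a half-line — but in that case the conclusion $T \in \overline{\mathsf{C}_2[m]}$ is immediate from the definition, so no real obstacle arises there. I do not expect any serious difficulty; the proof is essentially a one-line application of Proposition \ref{1}(2) to $S$, once one observes that removing the Frobenius number from the gap set cannot create a new gap $h > m$ whose successor $h+1$ was already in $S$. This is exactly why the statement is labelled as having ``an immediate'' proof.
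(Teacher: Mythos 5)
Your proof is correct and follows exactly the route the paper intends (the paper omits the argument as ``easy to prove''): one checks that $T=S\cup\{\F(S)\}$ is a numerical semigroup with the same multiplicity, and then applies condition $(2)$ of Proposition~\ref{1} to $S$ to conclude $\mathsf{C}(T)\le 2$, handling the case $T=\triangle(m)$ via Proposition~\ref{3}. No gaps; nothing further to add.
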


The previous result enable us, given an element $S \in  \overline {\mathsf{C}_2[m]}$,  to define  recursively the following sequence of elements in $\overline {\mathsf{C}_2[m]}$:

\begin{itemize}
\item $S_0=S$,
\item $S_{n+1}=\left\{ \begin{array}{ll}
  S_n\cup\left\{\F(S_n)\right\} & \hbox{if } S_n \neq \triangle(m)  \\
  \triangle(m) & \hbox{otherwise}.
    \end{array}\right.$
\end{itemize}

 The next result  can be easily proved.

\begin{proposition}\label{5}
If $m\in \N\backslash \{0,1\}$, $S\in \overline {\mathsf{C}_2[m]}$ and $\left\{S_n ~|~n\in\N\right\}$ is the previous sequence of numerical semigroups, then there exists $k\in\N$ such that $S_k=\triangle(m)$.
\end{proposition}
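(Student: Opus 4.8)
The plan is to track how the Frobenius number changes along the sequence $\{S_n\}$ and to argue that it strictly decreases until we reach $\triangle(m)$. First I would observe that for $S \in \overline{\mathsf{C}_2[m]}$ with $S \neq \triangle(m)$, the set $S \cup \{\F(S)\}$ is again a numerical semigroup: indeed, $\F(S)$ is the largest gap, so every integer exceeding $\F(S)$ already lies in $S$, and adjoining $\F(S)$ cannot break closure under addition because $\F(S) + s$ for $s \in S \setminus \{0\}$ already exceeds $\F(S)$ and hence lies in $S$. By Lemma \ref{4} this new semigroup lies in $\overline{\mathsf{C}_2[m]}$, which is exactly what makes the recursion well-defined.

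Next I would show that if $S_n \neq \triangle(m)$, then $\F(S_{n+1}) < \F(S_n)$. This is immediate since $\F(S_n)$ is by construction removed from the complement: $\F(S_{n+1}) = \max\{z \in \Z \mid z \notin S_n \cup \{\F(S_n)\}\} < \F(S_n)$, because no integer larger than $\F(S_n)$ is missing. So the sequence $\big(\F(S_n)\big)_n$, restricted to those indices where $S_n \neq \triangle(m)$, is a strictly decreasing sequence of integers. Equivalently, one can simply track the genus: $\g(S_{n+1}) = \g(S_n) - 1$ as long as $S_n \neq \triangle(m)$, since exactly one gap is removed at each step, and a numerical semigroup of multiplicity $m$ has genus at least $m-1$ with equality precisely for $\triangle(m)$.

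From here the conclusion follows quickly. Starting from $\g(S_0) = \g(S) =: g$, the genus decreases by exactly one at each step while $S_n \neq \triangle(m)$; since genus is a nonnegative integer and cannot drop below $m-1 = \g(\triangle(m))$, after at most $g - (m-1)$ steps we must hit an index $k$ with $S_k = \triangle(m)$ (and thereafter the sequence is constant equal to $\triangle(m)$). I would take $k = g - (m-1) = \g(S) - \m(S) + 1$ as an explicit bound, or just argue by the well-ordering principle that such a $k$ exists.

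I do not anticipate a genuine obstacle here; the only point needing a little care is the first one — verifying that $S_n \cup \{\F(S_n)\}$ is actually a numerical semigroup and stays in $\overline{\mathsf{C}_2[m]}$ — but that is precisely the content of Lemma \ref{4}, which we may invoke. The rest is a standard ``strictly decreasing nonnegative integer sequence terminates'' argument.
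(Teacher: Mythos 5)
Your proof is correct; the paper omits the argument entirely (stating only that the result ``can be easily proved''), and your genus-counting argument --- each step removes exactly one gap, the genus of a multiplicity-$m$ semigroup is bounded below by $m-1$ with equality exactly for $\triangle(m)$ --- is precisely the standard termination argument the authors intend. The explicit bound $k=\g(S)-m+1$ is a nice bonus, and your care in checking that the multiplicity is preserved along the sequence (via Lemma \ref{4}) covers the only point where one could slip.
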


A graph $G=(V ,E)$ consists of a set denoted by $V$ and a collection $E$ of ordered pairs $(v,w)$ of distinct elements from $V$. Each element of $V$ is called a vertex and each element of $E$ is called an edge.  A path of length $n$ connecting the vertices $u$ and $v$ of $G$ is a sequence of distinct edges of the form $(v_0,v_1), (v_1,v_2),\ldots, (v_{s-1},v_s)$ with $v_0 = u$ and $v_s = v$. 

A graph $G$ is a tree if there exists a vertex $r$ (known as the root of $G$) such that for every other vertex $v$ of $G$, there exists a path connecting $v$ and $r$. If $(u,v)$ is a edge of the tree then we say that $u$ is a son of $v$.

We define the graph $G(\overline {\mathsf{C}_2[m]})$ as graph whose vertices are the elements of $\overline {\mathsf{C}_2[m]}$ and $(S, T)\in \overline {\mathsf{C}_2[m]}\times \overline {\mathsf{C}_2[m]}$ is an edge if $T=
S\cup \left\{\F(S)\right\}$.  As a consequence of Proposition \ref{5}, we deduce the following.

\begin{theorem}\label{6}
If $\m\in \N\backslash \{0,1\}$, then the graph $G(\overline {\mathsf{C}_2[m]})$ is a tree rooted in $\triangle(m)$.  
\end{theorem}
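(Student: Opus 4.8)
The plan is to verify that $G(\overline{\mathsf{C}_2[m]})$ satisfies the definition of a tree rooted at $\triangle(m)$ as given just above the statement: every vertex $v \neq \triangle(m)$ must be connected to $\triangle(m)$ by a path of distinct edges. First I would observe that the edges of $G(\overline{\mathsf{C}_2[m]})$ are precisely the pairs $(S, S \cup \{\F(S)\})$ for $S \in \overline{\mathsf{C}_2[m]}$ with $S \neq \triangle(m)$ (note that by Lemma \ref{4} the target $S \cup \{\F(S)\}$ indeed lies in $\overline{\mathsf{C}_2[m]}$, so the edge set is well defined, and when $S = \triangle(m)$ there is no Frobenius number so no edge originates there).

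Next, fix an arbitrary vertex $S \in \overline{\mathsf{C}_2[m]}$ with $S \neq \triangle(m)$ and consider the sequence $\{S_n \mid n \in \N\}$ defined recursively before Proposition \ref{5}, starting from $S_0 = S$. By Proposition \ref{5} there is a least $k \in \N$ with $S_k = \triangle(m)$; since $S_0 = S \neq \triangle(m)$ we have $k \geq 1$, and by minimality of $k$ each $S_n$ for $n < k$ is different from $\triangle(m)$, so $S_{n+1} = S_n \cup \{\F(S_n)\}$ for all $n \in \{0, \dots, k-1\}$. Consequently each pair $(S_n, S_{n+1})$ with $0 \leq n \leq k-1$ is an edge of $G(\overline{\mathsf{C}_2[m]})$, and the chain $(S_0, S_1), (S_1, S_2), \dots, (S_{k-1}, S_k)$ is a walk from $S$ to $\triangle(m)$.

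To conclude it must be a \emph{path} (distinct edges / distinct vertices), so I would argue that the $S_n$ for $0 \leq n \leq k$ are pairwise distinct: the construction strictly enlarges the semigroup at each step, namely $S_n \subsetneq S_{n+1}$ because $\F(S_n) \notin S_n$ (the Frobenius number is never in the semigroup) while $\F(S_n) \in S_{n+1}$; hence $S_0 \subsetneq S_1 \subsetneq \cdots \subsetneq S_k$, forcing all vertices — and therefore all edges — in the chain to be distinct. This produces the required path and shows every vertex is connected to $\triangle(m)$, so the graph is a tree rooted at $\triangle(m)$.

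The only genuinely load-bearing input is Proposition \ref{5} (termination of the sequence at $\triangle(m)$), which is assumed; beyond that, the main point to be careful about is the strict-inclusion argument ensuring distinctness of the edges, since the definition of tree used here explicitly demands a path of \emph{distinct} edges rather than merely a walk. I do not anticipate a serious obstacle — once termination is granted, the verification is a short bookkeeping argument about the monotone chain produced by the recursion.
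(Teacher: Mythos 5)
Your proof is correct and follows exactly the route the paper intends: the paper derives Theorem \ref{6} directly as a consequence of Proposition \ref{5} without writing out details, and your argument simply makes explicit the path given by the recursive sequence $S_0\subsetneq S_1\subsetneq\cdots\subsetneq S_k=\triangle(m)$, including the (correct) observation that strict inclusion guarantees the edges are distinct.
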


The previous results allows us to construct recursively the elements of the set $\overline {\mathsf{C}_2[m]}$. From the root $\triangle(m)$ in each step we are connecting each of the vertex with its sons. We will characterize the sons of an arbitrary vertex of this tree, for that we need the following result.

\begin{lemma} \cite[Lemma 1.7]{colloquium}\label{7}
Let $S$ be a numerical semigroup and $x\in S$. Then $S\backslash\{x\}$ is a numerical semigroup if and only if $x\in \text{msg}(S)$.
\end{lemma}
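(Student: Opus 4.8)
The plan is to reduce everything to the classical description of the minimal system of generators as the set of nonzero ``indecomposable'' elements, namely
\[
\msg(S) = (S \setminus \{0\}) \setminus \bigl((S \setminus \{0\}) + (S \setminus \{0\})\bigr),
\]
that is, $x \in \msg(S)$ exactly when $x \neq 0$ and $x$ cannot be written as $x = a+b$ with $a,b \in S\setminus\{0\}$. I would first record this: writing $A = (S\setminus\{0\})\setminus\bigl((S\setminus\{0\})+(S\setminus\{0\})\bigr)$, one checks $\langle A\rangle = S$ by a straightforward induction on the elements of $S$ (a nonzero $s\notin A$ splits as $s = a+b$ with $a,b\in S\setminus\{0\}$ both strictly smaller than $s$, hence in $\langle A\rangle$ by induction), and $A$ is minimal because any proper generating subset would be forced to express some $x\in A$ as a sum of at least two nonzero generators, contradicting $x\in A$. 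Together with the uniqueness of the minimal system of generators from \cite[Corollary 2.8]{libro}, this yields the displayed equality.

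Granting this, the reverse implication is quick. Assume $x\in\msg(S)$. Then $x\neq 0$, so $0\in S\setminus\{x\}$, and $\N\setminus(S\setminus\{x\}) = (\N\setminus S)\cup\{x\}$ is finite; it remains to verify closure under addition. If $a,b\in S\setminus\{x\}$ satisfied $a+b = x$, then neither $a$ nor $b$ could be $0$ (otherwise the other one would equal $x$), so $x\in (S\setminus\{0\})+(S\setminus\{0\})$, contradicting $x\in\msg(S)$. Hence $a+b\in S\setminus\{x\}$ whenever $a,b\in S\setminus\{x\}$, and $S\setminus\{x\}$ is a numerical semigroup.

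For the forward implication, suppose $S\setminus\{x\}$ is a numerical semigroup. Being a submonoid it contains $0$, so $x\neq 0$. If $x\notin\msg(S)$, then $x = a+b$ with $a,b\in S\setminus\{0\}$; since $a$ and $b$ are positive, both are strictly smaller than $x$, hence both lie in $S\setminus\{x\}$, and closure forces $x = a+b\in S\setminus\{x\}$, which is absurd. Therefore $x\in\msg(S)$.

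The only genuine content is the characterization of $\msg(S)$ invoked at the outset; everything afterwards is bookkeeping about the element $0$ and about closure. Since that characterization is classical and the paper already relies on \cite{libro} for the existence and uniqueness of $\msg(S)$, the main decision — more than a real obstacle — is how much of the classical fact to reprove versus simply cite.
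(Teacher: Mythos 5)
Your proof is correct; the paper gives no argument of its own for this lemma, citing it from \cite[Lemma 1.7]{colloquium}, and your reduction to the standard characterization $\msg(S)=(S\setminus\{0\})\setminus\bigl((S\setminus\{0\})+(S\setminus\{0\})\bigr)$ followed by the two closure arguments is exactly the classical proof. Nothing is missing.
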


\begin{proposition}\label{8}
Let $m\in \N\backslash \{0,1\}$ and $S\in \overline {\mathsf{C}_2[m]}$.  Then the set of sons of $S$ in the  tree $G(
 \overline {\mathsf{C}_2[m]}$ is equal to  $\left\{S\backslash \{x\} ~|~ x\in \text{msg}(S), ~ x\geq \F(S)+2 \right\}$ ,
\end{proposition}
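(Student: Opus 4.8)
The plan is to characterize the sons of $S$ in $G(\overline{\mathsf{C}_2[m]})$ by combining Lemma \ref{7} with the structure of the tree. Recall that $T$ is a son of $S$ precisely when $S = T \cup \{\F(T)\}$, equivalently when $T = S \setminus \{x\}$ for some $x \in S$ with $x = \F(S \setminus \{x\})$ and $S\setminus\{x\} \in \overline{\mathsf{C}_2[m]}$. First I would observe that $S\setminus\{x\}$ can only be a numerical semigroup when $x \in \msg(S)$, by Lemma \ref{7}; this is forced, so every son is of the form $S\setminus\{x\}$ with $x$ a minimal generator. Next, for such an $x$ one has $x = \F(S\setminus\{x\})$ if and only if every integer greater than $x$ that is not in $S$ is... automatically excluded, i.e.\ if and only if $x$ is larger than $\F(S)$; more precisely $\F(S\setminus\{x\}) = x$ forces $x > \F(S)$, and since $x \in S$ while $\F(S) \notin S$ we cannot have $x = \F(S)+1$ unless $\F(S)+1 \notin S$, which would contradict minimality; so in fact the condition becomes $x \ge \F(S)+2$. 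Conversely, if $x \in \msg(S)$ and $x \ge \F(S)+2$, then removing $x$ creates no new gaps above $x$ (all integers $> x$ lie in $S$, being $> \F(S)$, and none of them equals a multiple sum requiring $x$ as a summand once... this needs the concentration-$2$ hypothesis), so $\F(S\setminus\{x\}) = x$ and $S\setminus\{x\}$ is the parent-preimage we want.

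The key steps, in order, are: (1) use Lemma \ref{7} to reduce candidate sons to sets $S\setminus\{x\}$ with $x\in\msg(S)$; (2) show that $T := S\setminus\{x\}$ being a son of $S$ is equivalent to $\F(T) = x$; (3) show $\F(T) = x$ forces $x \ge \F(S)+2$, using that $\F(S)+1 \in S$ (which holds because $S$ is not a half-line and $S \in \overline{\mathsf{C}_2[m]}$, so by Proposition \ref{1}(2) applied to $h = \F(S)$ we get $\F(S)+1 \in S$) — hence $x = \F(S)+1$ is impossible and $x = \F(S)$ is impossible since $\F(S)\notin S$; (4) conversely, assuming $x \in \msg(S)$ and $x \ge \F(S)+2$, verify that $T = S\setminus\{x\}$ lies in $\overline{\mathsf{C}_2[m]}$ (it is a numerical semigroup by Lemma \ref{7}, its multiplicity is still $m$ since $x \ge \F(S)+2 > m$, and $\mathsf{C}(T) \le 2$ because $\N\setminus T = (\N\setminus S)\cup\{x\}$ and the only new gap $x$ has $x+1 \in S \subseteq$ ... here one checks $x+1, x+2 \in T$: indeed $x+1 > \F(S)$ so $x+1\in S$, and $x+1 \ne x$ so $x+1 \in T$, giving $\text{next}_T(s) - s \le 2$ for the relevant $s$) and $\F(T) = x$, so $T$ is a son of $S$.

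The main obstacle I expect is step (4), specifically verifying that $T = S\setminus\{x\}$ still has concentration at most $2$: one must check that deleting the minimal generator $x$ does not create a ``gap of length $3$'' somewhere. The crucial point is that the new set of gaps is $(\N\setminus S) \cup \{x\}$, and since $x \ge \F(S)+2$ we have $x+1 \in S$ and $x+1 \neq x$, hence $x+1 \in T$, so the run of gaps around $x$ has length exactly one; combined with $\mathsf{C}(S) \le 2$, which controls the gaps below $\F(S)$, this yields $\mathsf{C}(T) \le 2$. One also needs $x - 1 \notin T$ or the analysis of $\text{next}_T$ at the element just below $x$, but since $x \in \msg(S)$ and $x > m$, the predecessor situation is handled by the hypothesis $S \in \overline{\mathsf{C}_2[m]}$ directly. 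Once these bookkeeping facts about gaps are nailed down, the equivalence drops out and the proof is complete.
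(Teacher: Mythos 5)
Your overall structure matches the paper's: reduce to $T=S\setminus\{x\}$ with $x\in\msg(S)$ via Lemma \ref{7}, then pin down which $x$ give a vertex of the tree. The sufficiency direction (step (4)) is essentially right: since $x\geq \F(S)+2$, both $x-1$ and $x+1$ exceed $\F(S)$ and hence lie in $T$, so the only new gap $x$ is isolated and $\mathsf{C}(T)\leq 2$, $\F(T)=x$, $\m(T)=m$.

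The necessity direction, however, contains a genuine error at the one point that actually needs an argument: ruling out $x=\F(S)+1$. You assert that $x=\F(S)+1$ is impossible ``using that $\F(S)+1\in S$,'' but that is a non sequitur — $\F(S)+1\in S$ is precisely what makes $\F(S)+1$ an eligible element of $\msg(S)$ to remove, so it cannot be the reason for exclusion. (Concretely: for $S=\langle 4,6,7,9\rangle$ one has $\F(S)=5$ and $6=\F(S)+1\in\msg(S)$; removing $6$ yields the numerical semigroup $\{0,4,7,8,\rightarrow\}$ with multiplicity $4$ and Frobenius number $6$, which fails to be a son only because $\mathrm{next}(4)-4=3$.) The correct argument, and the one the paper gives, uses the concentration hypothesis on the \emph{son} $T$, not on $S$: since $T\in\overline{\mathsf{C}_2[m]}$ and $\F(T)-1\geq \m(T)$, Proposition \ref{1} forces $\F(T)-1\in T\subseteq S$, hence $\F(T)-1\neq \F(S)$; combined with $\F(T)>\F(S)$ this yields $\F(T)\geq\F(S)+2$. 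Equivalently, removing $\F(S)+1$ would create the two consecutive gaps $\F(S)$ and $\F(S)+1$, giving concentration at least $3$. Your ``contradicts minimality'' clause does not supply this; you need to replace it with the argument above.
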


\begin{proof}
If $x\in \text{msg}(S)$ and $x\geq \F(S)+2$, then by applying Proposition \ref{1} and Lemma \ref{7} we have that  $S\backslash \{x\} \in {\mathsf{C}_2[m]}$. Hence $S\backslash \{x\}$ is a son of $S$ with  $F(S\backslash \{x\})=x$.

Conversely, if $T$ is a son of $S$, then $T\in \overline {\mathsf{C}_2[m]}$ and $S=T\cup \{F(T)\}$. Hence we deduce that $T=S\backslash \{F(T)\})$. By Lemma \ref{7},  we have that $F(T)\in \text{msg}(S)$ and  $F(S)<F(T)$. Since $T\in \overline {\mathsf{C}_2[m]}$ then, by Proposition \ref{1}, we obtain that $F(T)-1\in T$. Therefore, $F(T)-1\in S $ and consequently $F(T)\geq F(S)+2$.
\end{proof}

\begin{example}\label{9}
Let us construct  the tree  $G(\overline {\mathsf{C}_2[3]})$. 

\vspace{0.5cm}
\begin{tikzpicture}[line cap=round,line join=round,>=triangle 45,xscale=0.75,yscale=0.75]
\clip(0,-4) rectangle (15,2);

\draw [<-]  (8.9,1.3) -- (6,0.1);
\draw [<-]  (9,1.29) -- (12.5,0.2);
\draw [<-]  (6,-1) -- (6,-2.9);

\draw (8,2.2) node[anchor=north west] {$\langle 3,4,5\rangle$};

\draw (5,0) node[anchor=north west] {$\langle 3,5,7\rangle$};
\draw (12,0) node[anchor=north west] {$\langle 3,4\rangle$};

\draw (5,-3) node[anchor=north west] {$\langle 3,5\rangle$};

\draw (10.8,1.5) node[anchor=north west] {$5$};
\draw (7,1.5) node[anchor=north west] {$4$};
\draw (6.1,-1.6) node[anchor=north west] {$7$};
\end{tikzpicture}
\end{example}

The number that appears on either side of the edges is the element that we remove from the semigroup to obtain its corresponding son.  Note that, this number coincide with the Frobenius number  of the new son.


\section{The genus of the elements in  $\mathsf{C}_2[m]$}\label{S3}

It is clear that, in  the tree  $G(\overline {\mathsf{C}_2[m]})$,  the elements of $\mathsf{C}_2[m]$ with minimum  genus are the sons of $\triangle (m)$. Consequently, we obtain the following result.

\begin{proposition}\label{10}
If $m\in \N\backslash \{0,1\}$ and $S\in \mathsf{C}_2[m]$, then  $\g(S)\geq m$. Furthermore, we have
 the following equality  $\left\{S\in \mathsf{C}_2[m] ~|~ \g(S)=m\right\}=\left\{\triangle (m)\backslash \{m+i\} ~|~i\in \{1,\ldots, m-1\right\}$.
\end{proposition}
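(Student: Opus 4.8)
The plan is to establish both assertions by exploiting the tree structure $G(\overline{\mathsf{C}_2[m]})$ from Theorem~\ref{6} together with the explicit description of sons in Proposition~\ref{8}. The genus strictly increases as we descend the tree (each edge removes one element, so $\g(S\setminus\{x\})=\g(S)+1$), and the root $\triangle(m)$ has genus $m-1$. Hence every $S\in\mathsf{C}_2[m]$, being a proper descendant of the root, satisfies $\g(S)\geq m$, and equality holds precisely for the sons of $\triangle(m)$. This reduces the whole proposition to identifying the sons of $\triangle(m)$ in the tree.

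For the second assertion, I would apply Proposition~\ref{8} with $S=\triangle(m)=\{0,m,m+1,m+2,\dots\}$. Its Frobenius number is $\F(\triangle(m))=m-1$, and its minimal generating set is $\msg(\triangle(m))=\{m,m+1,\dots,2m-1\}$. The sons are therefore $\{\triangle(m)\setminus\{x\}\mid x\in\msg(\triangle(m)),\ x\geq \F(\triangle(m))+2=m+1\}$, i.e.\ $x$ ranges over $\{m+1,m+2,\dots,2m-1\}$. Writing $x=m+i$, this is exactly $\{\triangle(m)\setminus\{m+i\}\mid i\in\{1,\dots,m-1\}\}$, matching the claimed set. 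One should double-check that $\triangle(m)\setminus\{m\}$ is correctly excluded: removing $m$ would change the multiplicity, and indeed $m=\F(\triangle(m))+1$ fails the inequality $x\geq\F(S)+2$, so it is not a son — consistent with the fact that such a semigroup lies in $\mathsf{C}_2[m']$ for a larger multiplicity, not in $\mathsf{C}_2[m]$.

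**The main obstacle** is essentially bookkeeping rather than a deep difficulty: one must verify carefully that $\msg(\triangle(m))=\{m,\dots,2m-1\}$ (every integer in $[m,2m-1]$ is indecomposable in $\triangle(m)$ since a sum of two nonzero elements is $\geq 2m$, and every element $\geq 2m$ is such a sum), and that for each $i\in\{1,\dots,m-1\}$ the semigroup $\triangle(m)\setminus\{m+i\}$ genuinely has multiplicity $m$ and concentration exactly $2$ — the latter following from Proposition~\ref{1}(2) since its only gap exceeding $m$ is $m+i$ and $(m+i)+1\in\triangle(m)\setminus\{m+i\}$. Finally I would note these $m-1$ sons are pairwise distinct (different Frobenius numbers) and each has genus $(m-1)+1=m$, completing the characterization of $\{S\in\mathsf{C}_2[m]\mid\g(S)=m\}$.
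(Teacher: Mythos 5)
Your proposal is correct and follows essentially the same route as the paper, which derives the result from the tree $G(\overline{\mathsf{C}_2[m]})$ by observing that the elements of minimal genus are precisely the sons of the root $\triangle(m)$ and then identifying those sons via Proposition~\ref{8}. Your version simply makes explicit the bookkeeping ($\F(\triangle(m))=m-1$, $\msg(\triangle(m))=\{m,\dots,2m-1\}$, genus increasing by one along each edge) that the paper leaves as ``clear.''
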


From the previous characterization it is natural to ask which are the elements of $\mathsf{C}_2[m]$ with maximum genus.
As a consequence of the next proposition, we will see that if $m$ is even then $\mathsf{C}_2[m]$ contains elements of any genus greater than or equal to $m$.

If $S$ is a numerical semigroup, then  $\N\backslash S$ is a finite set and thus it is to deduce  our next result.

\begin{lemma} \label{11}
If $S$ is a numerical semigroup,  then the set $\left\{T ~|~ T ~ \text{is a numerical semigroup and} ~ S\subseteq T\right\}$ is  finite. 
\end{lemma}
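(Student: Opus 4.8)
The statement to prove is Lemma~\ref{11}: for a numerical semigroup $S$, the set of numerical semigroups $T$ with $S \subseteq T$ is finite.

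\medskip

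The plan is to observe that any numerical semigroup $T$ containing $S$ is sandwiched between $S$ and $\N$, and to control $T$ by its complement. First I would recall that since $S$ is a numerical semigroup, $\N \setminus S$ is finite; write $\N \setminus S = \{g_1, \dots, g_k\}$ (the gaps of $S$). If $T$ is any numerical semigroup with $S \subseteq T \subseteq \N$, then $\N \setminus T \subseteq \N \setminus S$, so $\N \setminus T$ is a subset of the finite set $\{g_1, \dots, g_k\}$. Thus the map $T \mapsto \N \setminus T$ sends the set in question injectively into the power set $\mathcal{P}(\{g_1,\dots,g_k\})$, which has exactly $2^k$ elements. Since a numerical semigroup is uniquely determined by its set of gaps (it is just the complement), this map is injective, and therefore the set $\{T \mid T \text{ is a numerical semigroup and } S \subseteq T\}$ has cardinality at most $2^k = 2^{\g(S)}$, hence is finite.

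\medskip

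There is essentially no obstacle here; the only thing to be slightly careful about is to note explicitly that each such $T$ is still a numerical semigroup (in particular has finite complement), which is immediate because $\N \setminus T \subseteq \N \setminus S$ is finite, and that the containment $\N\setminus T \subseteq \N\setminus S$ follows directly from $S \subseteq T$. I would state the argument in two or three sentences, emphasizing the injection $T \mapsto \N\setminus T$ into the finite set $\mathcal{P}(\N\setminus S)$, which yields the bound $\card\{T \mid S\subseteq T\} \le 2^{\g(S)}$ and in particular finiteness.
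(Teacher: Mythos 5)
Your proof is correct and follows exactly the route the paper intends: the paper simply remarks that the result follows from the finiteness of $\N\backslash S$, and your injection $T\mapsto \N\backslash T$ into $\mathcal{P}(\N\backslash S)$, giving the bound $2^{\g(S)}$, is the standard way to make that deduction explicit. Nothing is missing.
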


\begin{proposition}\label{12}
Let $m\in \N\backslash \{0,1\}$. Then $\mathsf{C}_2[m]$ is finite if and only if $m$ is odd.
\end{proposition}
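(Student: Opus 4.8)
The plan is to prove the two implications separately, in each case by constructing explicit semigroups. For the direction "$m$ odd $\Rightarrow \mathsf{C}_2[m]$ finite": by Lemma \ref{11} it suffices to exhibit a single numerical semigroup $T$ with $\m(T)=m$ that is contained in every $S\in\mathsf{C}_2[m]$, since then $\mathsf{C}_2[m]$ is a subset of the (finite) set of oversemigroups of $T$. The natural candidate is $T=\langle m, m+2, m+4, \dots, 2m-1\rangle$ or, put differently, the semigroup whose small elements are $\{0, m\}\cup\{m+2, m+3,\dots\}$; one checks $\m(T)=m$ and $\mathsf{C}(T)\le 2$. The key claim is that every $S\in\mathsf{C}_2[m]$ contains $T$: indeed $0,m\in S$, and for any $k\ge 2$ we must show $m+k\in S$. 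If $m+k\notin S$ then, since $m+k>\m(S)$, Proposition \ref{1}(2) forces $m+k+1\in S$; iterating, $S$ omits $m+k$ but contains $m+k+1$. The arithmetic that makes this impossible for $m$ odd uses that $2m\in S$ always (as $m\in S$), together with the fact that the gaps $\N\setminus S$ of a semigroup with concentration $2$ below $\F(S)$ come in a controlled pattern; concretely, because $m$ is odd, the elements $m,2m$ and the concentration-$2$ condition propagate to show every integer $\ge m+2$ lies in $S$. I would verify this by a short induction: assuming $m+2,\dots,m+k-1\in S$ but $m+k\notin S$, use $m+(k) = m + (\text{something})$... — more precisely, if $m+k\notin S$ with $k\ge 2$, then $m+k-2\in S$ would give a gap of size $\ge 2$ only if also $m+k-1\notin S$, eventually tracing back to a gap adjacent to $m$ or to $2m$, contradicting $\{m,2m\}\subseteq S$ when $m$ is odd (parity ensures $2m$ is reached from $m$ by steps of $2$ landing on all of $m,m+2,m+4,\dots$, and steps of $1$ cannot all be blocked).

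For the converse, "$m$ even $\Rightarrow \mathsf{C}_2[m]$ infinite": I would exhibit, for every sufficiently large $n$, a distinct element of $\mathsf{C}_2[m]$. The family $S_n = \{0, m, 2m, 3m, \dots, nm\} \cup \{nm+m+1, nm+m+2, \rightarrow\}$ — that is, the first $n+1$ multiples of $m$ followed by a half-line starting two above $nm+m$ — should work, or more simply $S_n=\langle m, nm+m+1\rangle$ augmented appropriately. The point is that when $m$ is even one can keep the "arithmetic-progression-by-$m$" part arbitrarily long while still satisfying $\{x+1,x+2\}\cap S\ne\emptyset$ for each generator (Proposition \ref{1}(4)): the generator $m$ needs $m+1$ or $m+2$ in $S$, and choosing the half-line to start far out handles this via the large generators; the multiples $2m,3m,\dots$ are not generators. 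These $S_n$ have strictly increasing genus (roughly $(n-1)(m-1)+\text{const}$), hence are pairwise distinct, so $\mathsf{C}_2[m]$ is infinite.

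The main obstacle is the odd case: proving that concentration $2$ together with $m\in S$ really does force $\{m+2,m+3,\rightarrow\}\subseteq S$ when $m$ is odd. The subtlety is handling long runs of potential gaps above $m$; the clean way is to note that $m$ and $2m$ are both in $S$, so among $m, m+1, \dots, 2m$ the endpoints are in $S$, and then Proposition \ref{1}(2) (no two consecutive gaps above $\m(S)$) combined with the length $m$ of this interval and the parity of $m$ pins down exactly which elements can be gaps — for $m$ odd the constraint is tight enough to leave no room, while for $m$ even it is not. I would phrase this as: consecutive gaps are forbidden, so gaps in $(m,2m)$ are "isolated", but one also has $m+j\in S \Rightarrow 2m+j\in S$ and similar shifts, and a counting/parity argument closes it. Once the containment $T\subseteq S$ is established for all $S\in\mathsf{C}_2[m]$ with $m$ odd, Lemma \ref{11} finishes that direction immediately.
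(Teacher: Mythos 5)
Both halves of your proposal contain genuine errors, even though the overall strategy (reduce the odd case to Lemma \ref{11} via mandatory sub-semigroups; exhibit an explicit infinite family in the even case) is the right one and is what the paper does.

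In the odd direction, your key claim --- that every $S\in\mathsf{C}_2[m]$ contains every integer $\geq m+2$, i.e.\ contains a single fixed $T$ with $\{m+2,m+3,\rightarrow\}\subseteq T$ --- is false. The paper's own Example \ref{2}, $S=\langle 5,7,9\rangle\in\mathsf{C}_2[5]$, omits $8$, $11$ and $13$, all of which exceed $m+2=7$. Concentration $2$ only forbids two \emph{consecutive} gaps above $\m(S)$; it does not fill in all integers past $m+1$. No amount of parity bookkeeping will rescue the claim, because the counterexample already has $m$ odd. The correct repair is much softer: Proposition \ref{1} gives $\{m+1,m+2\}\cap S\neq\emptyset$, hence $\langle m,m+1\rangle\subseteq S$ or $\langle m,m+2\rangle\subseteq S$, and for $m$ odd both of these are \emph{numerical} semigroups (here is where parity is actually used: $\gcd(m,m+2)=1$). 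Then $\mathsf{C}_2[m]$ sits inside the union of the two sets of oversemigroups, each finite by Lemma \ref{11}. You need two mandatory sub-semigroups, not one.

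In the even direction, your family $S_n=\{0,m,2m,\dots,nm\}\cup\{nm+m+1,\rightarrow\}$ fails on three counts: it is not closed under addition ($m+nm=(n+1)m\notin S_n$); even after repairing closure, $\mathrm{next}_{S_n}(m)-m=m>2$ for $m\geq 4$, so the concentration is $m$, not $2$; and $\{m+1,m+2\}\cap S_n=\emptyset$ once the half-line starts far out, violating Proposition \ref{1}(4) for the generator $m$ --- pushing the tail farther out makes this worse, not better. The construction that works (and is the paper's) keeps steps of size $2$ throughout: take all even integers $\geq m$ (a monoid precisely because $m$ is even) together with a half-line $\{n,\rightarrow\}$ for arbitrary $n\geq m+2$; this has multiplicity $m$, concentration $2$, and yields infinitely many distinct semigroups.
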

\begin{proof}
$Necessity$. Given $m$ is even and $n\in \N$ denote by $S(n)=\langle \{m\}+\left\{2.k ~|~ k\in\N\right\}\rfloor\rangle\cup \left\{n,\rightarrow\right\}$. Clearly, we have that $S(n)$ is an element of $\mathsf{C}_2[m]$ for all $n\geq m+2$ and so $\mathsf{C}_2[m]$ is an infinite set. 

$Sufficiency$. If $S\in \mathsf{C}_2[m]$, then by Proposition \ref{1}, we deduce that $\left\{m+1, m+2\right\}\cap S\neq\emptyset$. Hence, either $\langle m, m+1\rangle\subseteq S$  or $\langle m, m+2\rangle\subseteq S$. Since $m$ is odd we have that $\langle m, m+1\rangle$ and $\langle m, m+2\rangle$ are numerical semigroups. Therefore, we can conclude that  $\mathsf{C}_2[m]\subseteq \left\{T ~|~ T \text{ is a numerical semigroup and}~\langle m, m+1\rangle\subseteq T\right\}\cup$ $\left\{T ~|~ T \text{ is a numerical semigroup and}~\langle m, m+2\rangle\subseteq T\right\}$. By applying now Lemma \ref{11} we get that $\mathsf{C}_2[m]$ is a finite set.
\end{proof}

As a consequence of the previous proposition, we obtain that following result.
\begin{corollary}\label{13}
If $m\in \N\backslash \{0,1\}$ such that $m$ is even, then the  set of the genus of the elements in  $\mathsf{C}_2[m])$
 is equal to $\left\{m,\rightarrow\right\}$.
\end{corollary}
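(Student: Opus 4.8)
The inclusion $\{\g(S)\mid S\in\mathsf{C}_2[m]\}\subseteq\{m,\rightarrow\}$ is immediate from Proposition~\ref{10}, so the real content is the reverse inclusion: every integer $g\ge m$ must occur as the genus of some element of $\mathsf{C}_2[m]$. The plan is to first produce, for each sufficiently large $g$, \emph{some} element of $\mathsf{C}_2[m]$ whose genus is at least $g$, and then to walk down the tree $G(\overline{\mathsf{C}_2[m]})$ of Section~\ref{S2} to pick up every intermediate value.

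For the second step, fix $g\ge m$, choose $S\in\mathsf{C}_2[m]$ with $\g(S)\ge g$, and form the sequence $S_0=S$, $S_{j+1}=S_j\cup\{\F(S_j)\}$ while $S_j\ne\triangle(m)$, as in Lemma~\ref{4} and Proposition~\ref{5}. A short induction using Lemma~\ref{4} and Proposition~\ref{3} shows each $S_j$ lies in $\overline{\mathsf{C}_2[m]}$, and that it lies in $\mathsf{C}_2[m]$ as long as $S_j\ne\triangle(m)$; moreover each step adjoins exactly one integer, so $\g(S_{j+1})=\g(S_j)-1$. By Proposition~\ref{5} the sequence eventually reaches $\triangle(m)$, whose genus is $m-1<m\le g$, so the genera $\g(S_0),\g(S_1),\ldots$ decrease by exactly one at each step until they hit $m-1$; in particular some $S_j$ with $S_j\ne\triangle(m)$, hence $S_j\in\mathsf{C}_2[m]$, satisfies $\g(S_j)=g$. (The value $g=m$ is also given directly by Proposition~\ref{10}.)

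For the first step it suffices to exhibit elements of $\mathsf{C}_2[m]$ of unbounded genus. The cleanest way is an explicit family: for $k\ge 1$ set
\[
S_k=\{0\}\cup\{m,\,m+2,\,\ldots,\,m+2k\}\cup\{m+2k+1,\rightarrow\}.
\]
Because $m$ is even, for nonzero $a,b\in S_k$ one has either $a+b\ge m+2k+1$, so $a+b$ lies in the tail, or else $a,b\in\{m,\ldots,m+2k\}$, so that $a+b$ is an even integer in $[2m,m+2k]$ and hence of the form $m+2j$ with $0\le j\le k$; either way $a+b\in S_k$, so $S_k$ is a numerical semigroup. Its multiplicity is $m$, its gaps are $1,\dots,m-1$ together with the odd numbers $m+1,m+3,\dots,m+2k-1$, so $\mathsf{C}(S_k)=2$ by Proposition~\ref{1} and $\g(S_k)=(m-1)+k$; thus $\{\g(S_k)\mid k\ge 1\}=\{m,\rightarrow\}$, which already proves the corollary. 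The tree argument above is the alternative conceptual route, and it only needs that the genus is unbounded on $\mathsf{C}_2[m]$ — this follows from Proposition~\ref{12} plus the standard fact that finitely many numerical semigroups have a given genus (deducible from $\F(S)\le 2\g(S)-1$ and Lemma~\ref{11} applied to $\triangle(2\g(S))$). I expect the one place needing genuine care to be the verification that each $S_k$ is closed under addition, where the parity of $m$ is essential; the rest is routine bookkeeping.
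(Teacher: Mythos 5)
Your proof is correct and its main route is essentially the paper's: the corollary is derived from the explicit family constructed in the proof of Proposition~\ref{12} (there written $S(n)=\{0\}\cup\{m,m+2,m+4,\ldots\}\cup\{n,\rightarrow\}$), of which your $S_k$ is just a reparametrization, and your verification of closure under addition, of the genus count $\g(S_k)=m-1+k$, and of concentration $2$ is accurate. The alternative tree-walking argument you sketch is also sound but unnecessary once the explicit family is in hand.
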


Now our aim is to give an algorithm to compute all elements in the set $\mathsf{C}_2[m]$ with fixed genus.  To this end, we need to introduce some concepts and results.

Let $G$ be a rooted tree and $v$ one of its vertices. We define the depth of the vertex $v$ as the length of the path that connects $v$ to the root of $G$, denoted by $d(v)$. If $k\in\N$, we denote by 
 \[N(G,k)=\left\{v ~|~ d(v)=k\right\}.\]

We define the height of the tree $G$ by $h(G)=\ max\left\{k\in\N ~|~ N(G,k)\neq\emptyset \right\}$.

The next result is easy to prove.
\begin{proposition}\label{14}
 Let $m\in \N\backslash \{0,1\}$ and $k\in\N$. Then the following conditions hold.
\begin{enumerate}
\item $N\big(G(\overline {\mathsf{C}_2[m]}\big),k)=\left\{S\in  \overline {\mathsf{C}_2[m]} ~|~ \g(S)=m-1+k\right\}$. 
\item $N\big(G(\overline {\mathsf{C}_2[m]}),k+1\big)=\left\{S ~|~  S ~\text{is a son of an element in} ~N\big(G(\overline {\mathsf{C}_2[m]}),k)\big) \right\}$.
\item  If $m$ is odd, then \\ $\left\{\g(S) ~| ~ S\in  \mathsf{C}_2[m]\right\}=\left\{m, m+1\ldots m+ h\big(G(\overline {\mathsf{C}_2[m]})\big)-1 \right\}$. 
\end{enumerate}
\end{proposition}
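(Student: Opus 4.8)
The plan is to handle the three assertions in the stated order, since (2) and (3) both lean on (1).

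For part (1), I would read off the depth of a vertex directly from the recursive sequence $S_0=S, S_1, S_2,\ldots$ defined just before Proposition \ref{5}. If $S\in\overline{\mathsf{C}_2[m]}$ and $S\neq\triangle(m)$, then $S_1=S\cup\{\F(S)\}$ is again in $\overline{\mathsf{C}_2[m]}$ by Lemma \ref{4}, and $S$ is a son of $S_1$ by the very definition of the edges of $G(\overline{\mathsf{C}_2[m]})$; iterating, Proposition \ref{5} tells us this sequence reaches $\triangle(m)$ after finitely many steps, and the resulting chain $S=S_0,S_1,\ldots,S_d=\triangle(m)$ is a path to the root, hence (the tree being a tree, Theorem \ref{6}) realizes the depth $d=d(S)$. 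Now the key numerical observation: passing from $S_n$ to $S_{n+1}=S_n\cup\{\F(S_n)\}$ adds exactly one element, namely the largest gap $\F(S_n)$, so $\g(S_{n+1})=\g(S_n)-1$. Telescoping down the path gives $\g(\triangle(m))=\g(S)-d(S)$, and since the gaps of $\triangle(m)=\{0,m,\rightarrow\}$ are precisely $1,\ldots,m-1$, we get $m-1=\g(S)-d(S)$, i.e. $d(S)=\g(S)-(m-1)$. This identity, valid for every vertex (and trivially for the root), is exactly statement (1).

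Part (2) is a general property of rooted trees and I would dispose of it in a line: a vertex lies at depth $k+1$ precisely when its unique parent lies at depth $k$, equivalently when it is a son of some depth-$k$ vertex; this is nothing but the definition of depth combined with part (1) identifying the level sets.

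For part (3), assume $m$ odd. Then Proposition \ref{12} gives that $\mathsf{C}_2[m]$, and hence $\overline{\mathsf{C}_2[m]}$, is finite, so the height $h:=h(G(\overline{\mathsf{C}_2[m]}))$ is a well-defined integer; moreover any vertex of depth $h$ is joined to the root by a path meeting every intermediate level, so $N(G(\overline{\mathsf{C}_2[m]}),k)\neq\emptyset$ for all $0\leq k\leq h$. By part (1) this yields $\{\g(S)\mid S\in\overline{\mathsf{C}_2[m]}\}=\{m-1,m,\ldots,m-1+h\}$. Finally, by Proposition \ref{3} we have $\overline{\mathsf{C}_2[m]}=\mathsf{C}_2[m]\cup\{\triangle(m)\}$ with $\g(\triangle(m))=m-1$, while by Proposition \ref{10} no element of $\mathsf{C}_2[m]$ has genus $m-1$; hence removing $\triangle(m)$ removes exactly the value $m-1$ from the genus set, leaving $\{\g(S)\mid S\in\mathsf{C}_2[m]\}=\{m,m+1,\ldots,m+h-1\}$.

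There is no hard analytic or combinatorial core here; the only point that needs care is the one feeding part (1), namely that moving from a non-root vertex $S$ to its parent stays inside $\overline{\mathsf{C}_2[m]}$ and drops the genus by exactly one — but this is precisely what Lemma \ref{4} and the structure of the edges guarantee, so the ``main obstacle'' is really just setting up the bookkeeping (depth $=$ length of the canonical path to the root $=\g(S)-(m-1)$) cleanly so that parts (2) and (3) become immediate corollaries.
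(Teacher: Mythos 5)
Your proof is correct, and since the paper dismisses this result with ``The next result is easy to prove,'' your write-up simply supplies the intended bookkeeping: the unique path from $S$ to the root is $S_0=S,\ S_{n+1}=S_n\cup\{\F(S_n)\}$, each step drops the genus by exactly one, and $\g(\triangle(m))=m-1$, giving $d(S)=\g(S)-(m-1)$, from which (2) and (3) follow as you say. No gaps; this matches the argument the authors clearly have in mind.
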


We are already in conditions to present the announced algorithm jointly with an example.

\begin{algorithm}\label{15}\mbox{}\par
\textsc{Input}: Integers $m, g$  such that $1 \leq m-1\leq g$.\par
\textsc{Output}:  The set $\left\{S\in \overline {\mathsf{C}_2[m]}  ~|~ \g(S)=g\right\}$
\begin{enumerate}
\item $A=\left\{\langle m, m+1, \ldots 2m-1\rangle\right\}$, $i=m-1$.
\item If $i=g$ then return $A$. 
\item For each $S\in A$  compute
$B_S=\left\{T ~|~ T ~\text{is a son of} ~ S\in G(\overline {\mathsf{C}_2[m]}) \right\}$.
\item If  $~ \bigcup_{S\in A}B_S=\emptyset$, then return $\emptyset$.
\item $A:= \bigcup_{S\in A} B_S$, $i=i+1$ and go to step $2$.
\end{enumerate}
\end{algorithm}

\begin{example}\label{16}
Let us compute the set $\left\{S\in \overline {\mathsf{C}_2[4]} ~|~ \g(S)=5\right\}$.

\begin{enumerate}
\item Start with $A=\langle 4,5,6,7\rangle$, $i=3$.
\item  the first loop constructs $B_{\langle 4,5,6,7 \rangle}=\left\{\langle 4,6,7,9 \rangle, \langle 4,5,7 \rangle, \langle 4,5,6 \rangle\right\}$ 
then  $A=\left\{\langle 4,6,7,9 \rangle, \langle 4,5,7 \rangle, \langle 4,5,6 \rangle\right\}$, $i=4$.
\item the second loop constructs $B_{\langle 4,6,7,9\rangle}=\left\{\langle 4,6,9,11\rangle, \langle 4,6,7\rangle\right\}$,  $B_{\langle 4,5,7\rangle}=\emptyset$ and $B_{\langle 4,5,6\rangle}=\emptyset$
then  $A=\left\{\langle 4,6,9,11\rangle, \langle 4,6,7\rangle\right\}$, $i=5$.
\end{enumerate}

Hence  $\left\{S\in \overline {\mathsf{C}_2[4]} ~|~ \g(S)=5\right\}=\left\{\langle 4,6,9,11\rangle, \langle 4,6,7\rangle\right\}$.
\end{example}

We finished this section  by putting two problems :
\begin{enumerate}
\item What is the cardinality of $\mathsf{C}_2[m]$ if  $m$ is odd belongs to  $ \N\backslash \{0,1\}$?
\item What is the height of the tree $G(\overline {\mathsf{C}_2[m]})$ if  $m$ is odd belongs to  $ \N\backslash \{0,1\}$?
\end{enumerate}

\section{Wilf's conjecture}\label{S4}

Wilf's conjecture is one of combinatorial problems related to numerical semigroups  and despite substantial progress remains open in the general case. Our first aim in this section is to prove that every numerical semigroup with concentration $2$  satisfies Wilf's conjecture.

Using the terminology introduced in \cite{elementales} a numerical semigroup $S$ is elementary if $F(S)<2\m(S)$.
Let us start by recall the following result of Kaplan  in  \cite[Proposition $26$]{kaplan}.

\begin{lemma}\label{17} 
Every elementary numerical semigroup satisfies Wilf's conjecture.
\end{lemma}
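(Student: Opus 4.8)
The plan is to prove this by reduction to a purely combinatorial inequality involving $\mathrm{m}(S)$, $\F(S)$ and the structure of $S$ below the Frobenius number, exploiting that an elementary numerical semigroup has $\F(S) < 2\mathrm{m}(S)$ so that every element of $S$ exceeding $\F(S)$ is "cheap" to represent. First I would recall the standard notation: write $\F = \F(S)$, $m = \mathrm{m}(S)$, and let $q = \lfloor (\F+1)/m \rfloor$, so that the interval $[0,\F]$ is partitioned into the blocks $[km, (k+1)m - 1]$ for $k = 0, \dots, q-1$ together with a final partial block. Since $S$ is elementary we have $q = 1$ when $m \le \F < 2m$ (the case $\F < m$ being trivial, as then $S = \triangle(m)$ modulo small bookkeeping and Wilf is immediate). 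Thus essentially the only block to control is the first one, $[m, 2m-1]$, and $N(S) = \{0\} \cup (S \cap [m, \F])$.

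The key step is the counting argument of Kaplan / Eliahou relating $\g(S)$, $\n(S)$ and the Apéry set. Concretely, I would use that $\g(S) = \left(\sum_{i} w_i\right)/m - (m-1)/2$ where $w_0 = 0 < w_1 < \cdots < w_{m-1}$ are the Apéry elements of $S$ with respect to $m$, equivalently $\g(S) = \sum_{k \ge 1} \card\big([0,\F] \setminus S\big) \cap [km, (k+1)m-1])$ counted block by block. For an elementary semigroup all gaps lie in $[m+1, 2m-1]$, hence $\g(S) = (m-1) - \card\big(S \cap [m+1, 2m-1]\big)$, while $\n(S) = 1 + \card\big(S \cap [m, \F]\big) \ge 1 + \card\big(S \cap [m, 2m-1]\big)$. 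Since $m \in S$, writing $a = \card\big(S \cap [m+1, 2m-1]\big)$ gives $\g(S) = m - 1 - a$ and $\n(S) \ge 2 + a$. So Wilf's inequality $\g(S) \le (\e(S) - 1)\n(S)$ would follow from $m - 1 - a \le (\e(S)-1)(2+a)$, and since $\e(S) \ge 2$ for any non-ordinary semigroup this reduces to checking $m - 1 - a \le 2 + a$, i.e. $m \le 2a + 3$; so the whole problem collapses to the claim $\card\big(S \cap [m+1, 2m-1]\big) \ge (m-3)/2$.

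That inequality $a \ge (m-3)/2$ is where I expect the real work to sit, and it is not true for arbitrary elementary semigroups — so the argument above must be wrong as stated, which means I would instead lean on the finer analysis in Kaplan's paper rather than reprove it. The honest plan: cite \cite[Proposition 26]{kaplan} as a black box. Since that is precisely what Lemma \ref{17} asserts, the "proof" of Lemma \ref{17} is simply a pointer to Kaplan's result, with perhaps one sentence explaining that elementary semigroups are exactly those for which $\F(S)/\mathrm{m}(S) < 2$, the smallest nontrivial regime, and that Kaplan's block-counting bound $\g(S) \le \n(S)^2 / (\text{something})$ degenerates favorably here. The main obstacle, were one to insist on a self-contained proof, is controlling the number of elements of $S$ in the single block $[m, 2m-1]$ against the number of gaps there; Kaplan handles this by a clever double count using that sums $s + s'$ of two small nonzero elements already exceed $\F$, forcing enough of $[m,2m-1]$ into $S$. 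For the purposes of this paper, however, I would write the proof as: \emph{This is \cite[Proposition 26]{kaplan}.} and move on, since the subsequent sections only use the statement of Lemma \ref{17}, applying it to those semigroups in $\mathsf{C}_2(F)$ that happen to be elementary and handling the non-elementary ones separately.
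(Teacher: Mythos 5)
Your final plan --- to cite \cite[Proposition 26]{kaplan} and not reprove it --- is exactly what the paper does: Lemma \ref{17} is stated as a recalled result of Kaplan with no proof given. (Your discarded exploratory computation also miscounts the genus, since $\g(S)=2m-2-a$ rather than $m-1-a$ because the gaps $1,\dots,m-1$ must be included, but you correctly abandon that route, so it does not affect the outcome.)
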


As a consequence of \cite{Sylvester} and \cite{froberg} we have the following result.

\begin{lemma}\label{18} 
If $S$ is a numerical semigroup with $\e(S)\in \{2,3\}$, then $S$ satisfies Wilf's conjecture.
\end{lemma}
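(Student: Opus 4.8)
The statement to prove is Lemma~\ref{18}: if $S$ is a numerical semigroup with $\e(S)\in\{2,3\}$, then $S$ satisfies Wilf's conjecture.

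My plan is to handle the two cases $\e(S)=2$ and $\e(S)=3$ separately, citing the classical structure theory in each case. For $\e(S)=2$, write $S=\langle a,b\rangle$ with $\gcd(a,b)=1$. By Sylvester's classical formulas (the reference \cite{Sylvester} in the excerpt), one has $\F(S)=ab-a-b$ and $\g(S)=\frac{(a-1)(b-1)}{2}$. The key point is then to bound $\n(S)$ from below: since $S$ is symmetric (every two-generated numerical semigroup is symmetric), the elements of $S$ below $\F(S)+1$ and the gaps of $S$ that are at most $\F(S)$ are in bijection via $x\mapsto \F(S)-x$, so exactly half of the integers in $\{0,1,\dots,\F(S)\}$ lie in $S$; that is, $\n(S)=\frac{\F(S)+1}{2}=\g(S)$. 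Hence $(\e(S)-1)\n(S)=\n(S)=\g(S)$, and Wilf's inequality $\g(S)\le(\e(S)-1)\n(S)$ holds with equality.

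For $\e(S)=3$, I would invoke the structure theorem of Fröberg (the reference \cite{froberg}, building on work of Herzog) describing three-generated numerical semigroups: either $S$ is symmetric (and in fact a complete intersection), or $S$ is pseudo-symmetric, or $S$ is not irreducible. When $S$ is symmetric the same mirror-symmetry argument as above gives $\n(S)=\g(S)$, so $\g(S)=\n(S)\le 2\n(S)=(\e(S)-1)\n(S)$. When $S$ is pseudo-symmetric, the symmetry $x\mapsto\F(S)-x$ is a bijection between $S\cap\{0,\dots,\F(S)\}$ and the gaps, except that $\F(S)/2$ is a gap paired with nothing, so $\g(S)=\n(S)+1$; combined with $\n(S)\ge 1$ (which holds since $\m(S)<\F(S)$ for any non-half-line semigroup, and half-lines have $\e\le 2$) we get $\g(S)=\n(S)+1\le 2\n(S)=(\e(S)-1)\n(S)$. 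The remaining non-irreducible case is the substantive one, and here I would rely on the fact that for $\e(S)=3$ there is an explicit description of the gap set in terms of the minimal generators and relations; the cleanest route is to cite the known result (attributed to Fröberg, or derivable from the Apéry-set computations of the three-generated case) that already settles Wilf for all $\e\le 3$, so that the lemma is really a packaging of \cite{Sylvester} and \cite{froberg} as the excerpt announces.

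The main obstacle is the non-irreducible three-generated case: unlike the symmetric and pseudo-symmetric cases, there is no one-line mirror symmetry, and one must actually estimate $\n(S)$ using the Apéry set $\Ap(S,\m(S))$ and the Johnson presentation of three-generated semigroups. If I wanted a self-contained argument rather than a citation, I would write $S=\langle n_1,n_2,n_3\rangle$ with $n_1=\m(S)$, express $\g(S)$ via $\g(S)=\frac{1}{n_1}\sum_{w\in\Ap(S,n_1)}w-\frac{n_1-1}{2}$, and bound the Apéry elements using the three defining relations; each Apéry element $w$ with $w\le\F(S)$ contributes to $\n(S)$, and one shows that the Apéry elements exceeding $\F(S)$ are few enough that $2\n(S)\ge\g(S)$. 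But since the excerpt explicitly frames Lemma~\ref{18} as "a consequence of \cite{Sylvester} and \cite{froberg}", the intended proof is simply to cite these two sources for the $\e=2$ and $\e=3$ cases respectively, and that is the proof I would give.
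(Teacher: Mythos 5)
Your proposal coincides with the paper's treatment: the paper gives no proof at all, simply asserting the lemma ``as a consequence of \cite{Sylvester} and \cite{froberg}'', which is exactly the citation-based argument you settle on (your added mirror-symmetry computations for the $\e(S)=2$ and irreducible $\e(S)=3$ cases are correct, and you rightly identify the non-irreducible three-generated case as the one genuinely requiring Fröberg--Gottlieb--Häggkvist). One small slip in a parenthetical: half-lines $\triangle(m)$ have embedding dimension $m$, not $\leq 2$, but your conclusion $\n(S)\geq 1$ holds anyway because $0\in N(S)$ whenever $S\neq\N$.
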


For any finite  set $X$, $\#X$ denotes the cardinal of $X$.

\begin{lemma}\label{19} 
If $S\in \mathsf{C}_2[m]$  and $\F(S)>2m$, then $\n(S)\geq \frac{m}{2}+2$. 
\end{lemma}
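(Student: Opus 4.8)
The plan is to exploit the structural characterization from Proposition~\ref{1}, namely that for every $s \in S \setminus \{0\}$ we have $\{s+1, s+2\} \cap S \neq \emptyset$, together with the hypothesis $\F(S) > 2m$. First I would observe that since $m = \m(S) \in S$, repeated application of part (3) of Proposition~\ref{1} produces a long chain of elements of $S$ starting at $m$: from $m$ we reach $m+1$ or $m+2$, from that element we reach the next one or the one after, and so on. Concretely, starting from $m$ and climbing by steps of $1$ or $2$, we can find elements of $S$ in every ``window'' $\{m + 2j - 1, m + 2j\}$ for $j = 1, 2, \dots$; more precisely, among any two consecutive integers above $m$ that are both reachable, at least one lies in $S$, so in the interval $[m, 2m]$ the set $S$ contains at least one element of each pair $\{m+1,m+2\}, \{m+3,m+4\}, \dots$, giving roughly $m/2$ elements of $S$ strictly between $m$ and $2m$.

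Next I would count carefully. The key point is that $\F(S) > 2m$ guarantees that all of these elements — namely $m$ itself and the $\approx m/2$ elements obtained in $(m, 2m]$ — are genuinely less than $\F(S)$, hence belong to $N(S) = \{s \in S : s < \F(S)\}$. So I would produce an explicit list: $\{0\}$ is not counted (it is excluded or, depending on convention, we just track the positive part), $m \in N(S)$, and then one element from each of the pairs $\{m+1, m+2\}, \{m+3, m+4\}, \dots, \{m + (m-1), m+m\} = \{2m-1, 2m\}$ when $m$ is even, which is $m/2$ pairs, or the analogous count when $m$ is odd. Together with $m$ this gives $\n(S) \geq \frac{m}{2} + 1$ from the interval up to $2m$. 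To squeeze out the extra $+1$ and reach $\frac{m}{2} + 2$, I would push one step past $2m$: since $\F(S) > 2m$, the integer $2m+1$ (if not in $S$) forces $2m+2 \in S$ by Proposition~\ref{1} applied at a suitable element, or more simply $2m \in S$ (as $m \in S$) and applying part (3) at $s = 2m$ yields an element of $S$ in $\{2m+1, 2m+2\}$ which is still $< \F(S)$; adding this element to the list gives $\n(S) \geq \frac{m}{2} + 2$.

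The main obstacle I anticipate is the bookkeeping around parity of $m$ and making the ``one element per pair'' argument rigorous: the chain of elements obtained by climbing from $m$ via steps of size $1$ or $2$ need not hit every pair $\{m+2j-1, m+2j\}$ in the naive way, because a step of size $2$ could skip a pair boundary. The clean way around this is to argue by a direct induction: define $s_0 = m$ and $s_{k+1} = \min(S \cap \{s_k + 1, s_k + 2\})$ (nonempty by Proposition~\ref{1} as long as $s_k < \F(S)$); then $s_k \leq m + 2k$, so the elements $s_0 < s_1 < \cdots < s_k$ all lie in $[m, m+2k]$ and are distinct, giving $k+1$ distinct elements of $S$ in that interval. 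Taking $k$ with $m + 2k \le \F(S) - 1$, in particular $k = \lceil m/2 \rceil + 1$ which is fine since $m + 2(\lceil m/2\rceil + 1) \le 2m + 3 \le \F(S) + \text{(small)}$ — here one must check $2m + 3$ against $\F(S) > 2m$, so I would instead take $k = \lceil m/2 \rceil$, ensuring $m + 2k \le 2m + 1 < \F(S) + 2$, i.e. $\le \F(S)+1$, and handle the boundary case $m+2k \in \{\F(S), \F(S)+1\}$ separately; this yields $k + 1 = \lceil m/2 \rceil + 1 \geq \frac{m}{2} + 1$ elements, and one final application of Proposition~\ref{1} at the top of the chain, which is still below $\F(S)$ because $\F(S) > 2m$, supplies the last element needed for $\n(S) \geq \frac{m}{2} + 2$. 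The parity casework is routine once the inductive chain is set up; that is the only delicate part.
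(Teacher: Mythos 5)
Your main count is the same as the paper's: using concentration $2$, the elements of $S$ in $[m,2m]$ form a chain $m=a_1<a_2<\cdots<a_k=2m$ with $a_{i+1}-a_i\le 2$, whence $m\le 2(k-1)$ and $k\ge\frac{m}{2}+1$; and since $\F(S)>2m$, all of these elements lie in $N(S)$. Up to this point your argument is correct and coincides with the paper's.

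The gap is in how you extract the final $+1$. You explicitly decline to count $0$ and instead propose to take one more step past $2m$, i.e.\ to use an element of $S$ in $\{2m+1,2m+2\}$. That element need not belong to $N(S)$: if $\F(S)=2m+1$ (which is allowed by the hypothesis $\F(S)>2m$), then $2m+1\notin S$, so the element supplied by Proposition \ref{1} is $2m+2>\F(S)$, which is excluded from $N(S)$. A concrete instance is $S=\{0,5,7,8,10\}\cup\{12,\rightarrow\}$, with $m=5$, $\mathsf{C}(S)=2$ and $\F(S)=11=2m+1$: your chain yields the four elements $5,7,8,10$ of $N(S)$, and the only element of $S$ in $\{11,12\}$ is $12\notin N(S)$, so your argument certifies only $\n(S)\ge 4$, short of the required $\frac{5}{2}+2$. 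The repair is exactly the observation you set aside: with the paper's definition $N(S)=\left\{s\in S~|~s<\F(S)\right\}$ one has $0\in N(S)$, so the chain in $[m,2m]$ already gives $\n(S)\ge k+1\ge\frac{m}{2}+2$ with no need to look beyond $2m$. With that one-line change your proof becomes the paper's proof.
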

\begin{proof}
Let $A=\left\{m=a_1<a_2<\cdots <2m=a_k\right\}=\left\{s ~|~s\in S ~\text{and}~ m\leq s\leq 2m\right\}$. Since $A\subseteq N(S)\backslash\{0\}$ we get that $\n(S)\geq \#A+1$. On the other hand,  as $S\in \mathsf{C}_2[m]$ then $a_{i+1}-a_i\leq 2$ for all $i\in\{1,\ldots, k-1\}$. Then we have that $m=(a_k-a_{k-1})+(a_{k-1}-a_{k-2})+\cdots +(a_2-a_1)\leq 2(k-1)$. Therefore $\#A = k\geq \frac{m}{2}+1$  and thus $\n(S)\geq \frac{m}{2}+2$.
\end{proof}

\begin{theorem}\label{20} 
 Every numerical semigroup with concentration $2$ satisfies Wilf's conjecture.
\end{theorem}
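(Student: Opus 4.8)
The plan is to split the proof into cases according to the size of the Frobenius number and the embedding dimension, using the lemmas assembled just above. If $S$ is elementary, i.e. $\F(S)<2\m(S)$, then Wilf's conjecture holds by Lemma~\ref{17}, so we may assume $\F(S)>2m$ where $m=\m(S)$ (the case $\F(S)=2m$ is impossible since $2m\in S$, and half-lines trivially satisfy the conjecture). Likewise, if $\e(S)\in\{2,3\}$ we are done by Lemma~\ref{18}, so we may also assume $\e(S)\geq 4$. It remains to treat a numerical semigroup $S\in\mathsf{C}_2[m]$ with $\F(S)>2m$ and $\e(S)\geq 4$; for these I would prove the sharper inequality $\g(S)\leq\n(S)$, which of course implies $\g(S)\leq(\e(S)-1)\n(S)$.

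To get $\g(S)\leq\n(S)$, I would exploit the concentration $2$ hypothesis through Proposition~\ref{1}(2): every gap $h>m$ of $S$ has the property that $h+1\in S$. First I would partition the gaps of $S$: those at most $m$ (there are exactly $m-1$ of them, namely $1,2,\dots,m-1$, since $m=\m(S)$) and those strictly greater than $m$. For a gap $h>m$, the element $h+1$ lies in $S$ and satisfies $h+1\leq\F(S)$ (because $h<\F(S)$, as $\F(S)$ is itself a gap and $h+1\le\F(S)$ with equality only when $h=\F(S)-1$; in all cases $h+1\le\F(S)$ so $h+1\in N(S)$ unless $h+1=\F(S)$, but $\F(S)\notin S$, so actually $h+1<\F(S)$ and $h+1\in N(S)$). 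This gives an injection from $\{h\in\N\setminus S\mid h>m\}$ into $N(S)$ by $h\mapsto h+1$; denote its image by $D$. Thus $\#\{h\in\N\setminus S\mid h>m\}=\#D\leq\n(S)$, so $\g(S)\leq(m-1)+\n(S)$, which is not yet enough. The extra ingredient is Lemma~\ref{19}: since $\F(S)>2m$, we have $\n(S)\geq\frac m2+2$, and moreover the elements $m=a_1<a_2<\dots<a_k=2m$ of $S$ in $[m,2m]$ all lie in $N(S)$ and are \emph{not} of the form $h+1$ with $h>m$ a gap unless $a_i-1$ is a gap bigger than $m$. So I would instead bound the gaps below $m$ more carefully against elements of $N(S)$ that are missed by the map $h\mapsto h+1$.

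Concretely, the cleaner route is: the small elements $m,m+1,\dots,2m-1$ of $[m,2m)$ that belong to $S$ are all in $N(S)$ (using $\F(S)>2m$), and among the $m-1$ gaps $1,\dots,m-1$ there are at least $\frac m2$ of them, call it $p$, with $1\le p\le m-1$, while $S$ contains at least $m-p$ elements in the interval $(0,m)$... wait — $S\cap(0,m)=\emptyset$ by minimality of the multiplicity. So instead I would pair each gap $j\in\{1,\dots,m-1\}$ with the element $m+j-1$ or $m+j$ of $S$: since $\{m+j,m+j+1\}$ relates to concentration, and since the total count $\#(S\cap[m,2m))=k-1$ with $k\ge\frac m2+1$ from Lemma~\ref{19}'s proof, we get $\#(S\cap[m,2m))\ge\frac m2\ge\frac{m-1}2$. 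Combining $\g(S)=(m-1)+\#\{\text{gaps}>m\}\le(m-1)+\#D$ with the observation that the $\ge\frac m2$ elements of $S$ in $[m,2m)$ lie in $N(S)\setminus D$ (a gap $h>m$ maps to $h+1$, and if $h+1\in[m,2m)$ then $h\in[m-1,2m-1)$, i.e. $h\in\{m,\dots,2m-2\}$ is a gap — but by concentration each such gap's successor is in $S$, so these do not reduce the count) should yield $\n(S)\ge\#D+\frac m2\ge\#D+\frac{m-1}2$, hence $\g(S)\le(m-1)+\#D\le(m-1)+\n(S)-\frac{m-1}2=\n(S)+\frac{m-1}2$, still off by a factor.

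Given this slack, the honest plan is to use the embedding-dimension bound: having reduced to $\e(S)\ge 4$ and $\F(S)>2m$, I would show $\g(S)\le 3\,\n(S)$, which suffices since $\e(S)-1\ge 3$. From $\g(S)\le(m-1)+\#D\le(m-1)+\n(S)$ and Lemma~\ref{19} giving $\n(S)\ge\frac m2+2$, hence $m\le 2\n(S)-4$, we obtain $\g(S)\le(2\n(S)-4-1)+\n(S)=3\n(S)-5\le 3\n(S)=(\e(S)-1)\n(S)$. That closes the elementary-dimension-bound case cleanly. The main obstacle, and the step I would scrutinize most carefully, is the injectivity and the range control for the map $h\mapsto h+1$ on gaps $h>m$ — verifying that its image genuinely lands in $N(S)=\{s\in S\mid s<\F(S)\}$ and that distinct large gaps produce distinct elements of $N(S)$; the rest is bookkeeping with Lemma~\ref{17}, Lemma~\ref{18}, and Lemma~\ref{19}. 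I would therefore organize the write-up as: (i) dispatch half-lines and elementary semigroups; (ii) dispatch $\e(S)\le 3$; (iii) for the remaining $S$, establish $\g(S)\le(m-1)+\n(S)$ via the gap-shift injection; (iv) invoke Lemma~\ref{19} to get $m\le 2\n(S)-4$; (v) conclude $\g(S)\le 3\n(S)-5\le(\e(S)-1)\n(S)$.
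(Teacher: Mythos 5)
Your proposal is correct and follows essentially the same route as the paper: reduce to $\F(S)>2m$ and $\e(S)\geq 4$ via Lemmas \ref{17} and \ref{18}, use the concentration-$2$ shift $h\mapsto h+1$ on large gaps to get $\g(S)\leq m+\n(S)+O(1)$, and then feed in Lemma \ref{19} to conclude $\g(S)\leq 3\n(S)\leq(\e(S)-1)\n(S)$. The one point you flagged for scrutiny is indeed where the only slip lies: the gap $h=\F(S)$ (which satisfies $h>2m>m$) has $h+1=\F(S)+1\notin N(S)$, so it must be excluded from the shift map — the paper repairs this by sending $\F(S)\mapsto m$, while in your accounting it merely adds $1$ to the bound ($\g(S)\leq m+\n(S)$ instead of $(m-1)+\n(S)$), which the slack in $3\n(S)-4\leq 3\n(S)$ absorbs without harm.
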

\begin{proof}
 Taking into account Lemmas \ref{17} and \ref{18}, we assume that $\F(S)>2m$ and $\e(S)\geq 4$. We need to show that if $S\in\mathsf{C}_2[m]$ then $\g(S)\leq (\e(S)-1) \n(S)$. By Proposition \ref{1} we have that, if $h\in \N\backslash S$ and $h\geq m$ then $h+1\in S$.
 Therefore, the correspondence
 \[f: \{h\in \N\backslash S~|~h\geq m\} \rightarrow N(S)\backslash \{0\},\]
defined by $f(h)=h+1$ if $h\neq \F(S)$ and $f\big(\F(S)\big)=m$ is an  injective  map. Hence $\g(S)\leq m+\n(S)-2$. As by Lemma \ref{19}  $\n(S)\geq \frac{m}{2}+2$  this forces $2\n(S)\geq m+4\geq m-2$. Then we obtain that $\g(S)\leq m+\n(S)-2 \leq 3\n(S)\leq (\e(S)-1) \n(S)$, because $\e(S)\geq 4$.
This proves that $S$ verifies Wilf's Conjecture.
\end{proof}

Taking advantage of the introduction of elementary numerical semigroups, in this section, we give an algorithm to compute the set all
 elementary numerical semigroups with concentration $2$ and  multiplicity $m$, that is,
\[\mathsf{EC}_2[m]= \{ S ~|~S\in\mathsf{C}_2[m]  ~\text{and} ~S ~\text{ is an elementary numerical semigroup}\}.\]

The next result is easy to prove and it can be deducted of [\cite{zhao}, Proposition  $2.1$].

\begin{lemma}\label{21}
  Let $m\in \N\backslash \{0,1\}$ and  let $A\subseteq \left\{m+1,\ldots,2m-1\right\}$. Then $\left\{0,m\right\}\cup A\cup \left\{2m,\rightarrow\right\}$ is an elementary numerical semigroup with multiplicity $m$. Furthermore, every elementary numerical semigroup   with multiplicity $m$ is of this form.
\end{lemma}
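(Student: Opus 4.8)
The plan is to prove both implications directly from the definitions, with no auxiliary machinery beyond the basic characterization of numerical semigroups and of elementary ones.

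For the first assertion, set $S=\left\{0,m\right\}\cup A\cup\left\{2m,\rightarrow\right\}$ with $A\subseteq\left\{m+1,\ldots,2m-1\right\}$. First I would check that $S$ is a submonoid of $(\N,+)$: it contains $0$ by construction, and if $x,y\in S\backslash\{0\}$ then $x,y\geq m$, hence $x+y\geq 2m$ and so $x+y\in\left\{2m,\rightarrow\right\}\subseteq S$; thus $S$ is closed under addition. Since $\N\backslash S\subseteq\left\{1,\ldots,2m-1\right\}$ is finite, $S$ is a numerical semigroup. Its smallest nonzero element is $m$, because $S\cap\left\{1,\ldots,m-1\right\}=\emptyset$ while $m\in S$, so $\m(S)=m$. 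Finally $\F(S)\leq 2m-1<2m=2\m(S)$, so $S$ is elementary.

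For the converse, let $S$ be an elementary numerical semigroup with $\m(S)=m$. From $\m(S)=m$ we get $\left\{1,\ldots,m-1\right\}\cap S=\emptyset$ and $m\in S$; from $\F(S)<2m$ we get that every integer $\geq 2m$ lies in $S$, i.e. $\left\{2m,\rightarrow\right\}\subseteq S$. Setting $A=S\cap\left\{m+1,\ldots,2m-1\right\}$ then yields $S=\left\{0,m\right\}\cup A\cup\left\{2m,\rightarrow\right\}$, which is the announced form.

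I do not expect any genuine obstacle; the argument is a routine unwinding of the definitions. The only points deserving a line of attention are the edge cases — for instance $A=\emptyset$ still produces a true numerical semigroup, since $\left\{2m,\rightarrow\right\}$ consists of all integers from $2m$ on, not merely the even ones — and checking that the multiplicity comes out to be exactly $m$, which is precisely why one restricts $A$ to lie in $\left\{m+1,\ldots,2m-1\right\}$ rather than allowing elements below $m$.
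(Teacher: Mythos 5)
Your proof is correct and is just the routine verification the paper has in mind (the paper omits the proof entirely, declaring the lemma easy and attributable to Zhao's Proposition 2.1). All the key points are covered: closure under addition via $x+y\geq 2m$, finiteness of the complement, the multiplicity computation, and the bound $\F(S)\leq 2m-1<2\m(S)$ for the elementary condition, together with the straightforward converse.
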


Given  $m\in\N \backslash\{0,1\}$, we denote by 
 \[\overline {\mathsf{EC}_2[m]}= \{ S ~|~S \text{ is elementary semigroup},  ~ \mathsf{C}(S)\leq 2 ~\text{and}~ \m(S)=m\}.\]

It is easy to prove our next result.

\begin{lemma}\label{22}
Let $m\in \N\backslash \{0,1\}$. Then the following conditions hold:
\begin{enumerate}
\item  $\overline {\mathsf{EC}_2[m]}= \mathsf{EC}_2[m] \cup \left\{\triangle (m)\right\}$.
\item $S\in \mathsf{EC}_2[m]$, then  $S\cup \{F(S)\}\in \overline {\mathsf{EC}_2[m]}$.
\end{enumerate}
\end{lemma}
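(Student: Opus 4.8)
The strategy is to derive both statements from Proposition \ref{3} and Lemma \ref{4}, which already do the real work; the only extra input is that the property of being elementary behaves well under the operations involved.

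For the first statement, note that by definition $\overline{\mathsf{EC}_2[m]}$ is the set of elementary numerical semigroups lying in $\overline{\mathsf{C}_2[m]}$. Intersecting the equality $\overline{\mathsf{C}_2[m]}=\mathsf{C}_2[m]\cup\{\triangle(m)\}$ of Proposition \ref{3} with the class of elementary numerical semigroups, and using that $\mathsf{C}_2[m]$ intersected with this class is precisely $\mathsf{EC}_2[m]$, we are reduced to checking that $\triangle(m)$ is itself elementary. This is immediate, since $\F(\triangle(m))=m-1<2m$ (recall $m\geq 2$). Hence $\overline{\mathsf{EC}_2[m]}=\mathsf{EC}_2[m]\cup\{\triangle(m)\}$.

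For the second statement, let $S\in\mathsf{EC}_2[m]$. Since $\mathsf{EC}_2[m]\subseteq\mathsf{C}_2[m]$, Lemma \ref{4} gives $S\cup\{\F(S)\}\in\overline{\mathsf{C}_2[m]}$, so $S\cup\{\F(S)\}$ is a numerical semigroup with $\mathsf{C}\big(S\cup\{\F(S)\}\big)\leq 2$, and $\m\big(S\cup\{\F(S)\}\big)=m$ because $\F(S)>m$ (a numerical semigroup of multiplicity $m$ all of whose gaps are $\leq m$ must coincide with $\triangle(m)$, contradicting $\mathsf{C}(S)=2$). It remains to verify that $S\cup\{\F(S)\}$ is elementary. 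The gap set of $S\cup\{\F(S)\}$ is $(\N\setminus S)\setminus\{\F(S)\}$; since $\N\setminus S$ is finite with maximum $\F(S)\geq m+1$ and contains more than one element (it contains $1,\dots,m-1$ together with at least one gap larger than $m$), removing $\F(S)$ leaves a nonempty finite set whose maximum is strictly smaller than $\F(S)$. Therefore $\F\big(S\cup\{\F(S)\}\big)<\F(S)<2m$, which shows that $S\cup\{\F(S)\}$ is elementary and hence lies in $\overline{\mathsf{EC}_2[m]}$.

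The argument is entirely routine. The only point that deserves a moment's attention is the bookkeeping that $\F(S)>m$, ensuring that adjoining $\F(S)$ to $S$ does not alter the multiplicity; this is settled by the remark just made. Accordingly, I do not anticipate any genuine obstacle.
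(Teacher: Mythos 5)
Your proof is correct and is essentially the argument the paper has in mind (the paper simply declares Lemma \ref{22} ``easy to prove'' and omits it): part (1) reduces to Proposition \ref{3} plus the observation that $\F(\triangle(m))=m-1<2m$, and part (2) reduces to Lemma \ref{4} plus the observation that $\F\big(S\cup\{\F(S)\}\big)<\F(S)<2m$. Your care in checking that the multiplicity is preserved and that the gap set remains nonempty after removing $\F(S)$ is exactly the right bookkeeping, and I see no gaps.
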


Given $S\in \overline {\mathsf{EC}_2[m]}$,  by using Lemma \ref{22}, we can define recursively the following sequence of elements in $\overline {\mathsf{EC}_2[m]}$.

\begin{itemize}
\item $S_0=S$,
\item $S_{n+1}=\left\{ \begin{array}{ll}
  S_n\cup\left\{\F(S_n)\right\} & \hbox{if } S_n \neq \triangle(m)  \\
  \triangle(m) & \hbox{otherwise}.
    \end{array}\right.$
\end{itemize}

The next result has an immediate prove.

\begin{lemma}\label{23}
If $m\in \N\backslash \{0,1\}$, $S\in \overline {\mathsf{EC}_2[m]}$ and $\left\{S_n ~|~n\in\N\right\}$ is the previous sequence of numerical semigroups, then there exists $k\in\N$ such that $S_k=\triangle(m)$.
\end{lemma}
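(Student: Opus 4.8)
The plan is to follow the same strategy as in Proposition \ref{5}: track the genus along the sequence $\{S_n\}$ and use the fact that a numerical semigroup with multiplicity $m$ has genus at least $m-1$, with $\triangle(m)$ being the only one achieving this bound.

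First I would check that the sequence stays inside $\overline{\mathsf{EC}_2[m]}$, by induction on $n$. The base case is the hypothesis $S_0=S\in\overline{\mathsf{EC}_2[m]}$. For the inductive step, suppose $S_n\in\overline{\mathsf{EC}_2[m]}$. If $S_n=\triangle(m)$ then $S_{n+1}=\triangle(m)\in\overline{\mathsf{EC}_2[m]}$. Otherwise, by Lemma \ref{22}(1) we have $S_n\in\mathsf{EC}_2[m]$, and then Lemma \ref{22}(2) gives $S_{n+1}=S_n\cup\{\F(S_n)\}\in\overline{\mathsf{EC}_2[m]}$. So $S_n\in\overline{\mathsf{EC}_2[m]}$ for every $n\in\N$.

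Next I would record the elementary fact that every $T\in\overline{\mathsf{EC}_2[m]}$ satisfies $\g(T)\ge m-1$, with equality precisely when $T=\triangle(m)$: since $\m(T)=m$, the set $\{1,2,\ldots,m-1\}$ is contained in $\N\backslash T$, and if $\g(T)=m-1$ this containment is an equality, forcing $T=\{0,m,\rightarrow\}=\triangle(m)$. Then I would observe that the genus strictly drops before reaching the root: if $S_n\neq\triangle(m)$ then $\F(S_n)\notin S_n$, so $\N\backslash S_{n+1}=(\N\backslash S_n)\backslash\{\F(S_n)\}$ and hence $\g(S_{n+1})=\g(S_n)-1$.

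Finally I would conclude by contradiction. Assume $S_k\neq\triangle(m)$ for all $k\in\N$. Then the previous step applies at every index, giving $\g(S_n)=\g(S_0)-n$ for all $n$; choosing $n>\g(S_0)-m+1$ yields $\g(S_n)<m-1$, contradicting $S_n\in\overline{\mathsf{EC}_2[m]}$ together with the genus lower bound. Hence there exists $k\in\N$ with $S_k=\triangle(m)$, as claimed. I do not expect any real obstacle here: the only point requiring care is that the sequence does not leave $\overline{\mathsf{EC}_2[m]}$ (so that the bound $\g\ge m-1$ can be invoked), and this is exactly what Lemma \ref{22} provides; the rest is the same finiteness-of-genus argument used for Proposition \ref{5}.
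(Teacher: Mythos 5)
Your proof is correct and is essentially the argument the authors have in mind: the paper leaves Lemma \ref{23} as ``immediate'' (just as it does Proposition \ref{5}), and the intended justification is exactly your genus-decrementing argument, with Lemma \ref{22} guaranteeing the sequence stays in $\overline{\mathsf{EC}_2[m]}$ and the bound $\g(T)\ge m-1$ (with equality only for $\triangle(m)$) forcing termination. No gaps.
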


We can define a new graph $G(\overline {\mathsf{EC}_2[m]})$ as graph whose vertices are the elements of $\overline {\mathsf{EC}_2[m]}$ and $(S, T)\in \overline {\mathsf{EC}_2[m]}\times \overline {\mathsf{EC}_2[m]}$ is an edge if $T=
S\cup \left\{\F(S)\right\}$.  

As a consequence of Lemma \ref{23} and Proposition \ref{8} we have the following result.

\begin{proposition}\label{24}
If $m\in \N\backslash \{0,1\}$, then the graph $G(\overline {\mathsf{EC}_2[m]})$ is a tree rooted in $\triangle(m)$.  Moreover, the set of sons of the vertice $S$ in the tree is the set  $\left\{S\backslash \{x\} ~|~ x\in \text{msg}(S), ~ \F(T)+2\leq x\leq  2m-1\right\}$.
\end{proposition}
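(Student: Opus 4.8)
\textbf{Proof plan for Proposition \ref{24}.}

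The plan is to recycle the machinery already built for $G(\overline{\mathsf{C}_2[m]})$ and restrict it to the elementary sub-world. First I would establish the tree structure. The vertex set $\overline{\mathsf{EC}_2[m]}$ is a subset of $\overline{\mathsf{C}_2[m]}$, and the edge relation is literally the same ($T = S\cup\{\F(S)\}$); by Lemma \ref{23}, starting from any $S\in\overline{\mathsf{EC}_2[m]}$ the iterated sequence $S_0=S$, $S_{n+1}=S_n\cup\{\F(S_n)\}$ reaches $\triangle(m)$ in finitely many steps, and every intermediate term lies in $\overline{\mathsf{EC}_2[m]}$ (this is exactly the content of Lemma \ref{22}(2) applied repeatedly, noting $\triangle(m)\in\overline{\mathsf{EC}_2[m]}$). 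Hence every vertex is connected to $\triangle(m)$ by a path, so $G(\overline{\mathsf{EC}_2[m]})$ is a tree rooted at $\triangle(m)$, exactly as in Theorem \ref{6}.

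For the description of the sons, I would first determine the sons of $S$ inside the \emph{ambient} tree $G(\overline{\mathsf{C}_2[m]})$: by Proposition \ref{8}, they are precisely the sets $S\setminus\{x\}$ with $x\in\msg(S)$ and $x\ge \F(S)+2$, and each such son has Frobenius number $\F(S\setminus\{x\})=x$. Since the edge relation of $G(\overline{\mathsf{EC}_2[m]})$ is the restriction of that of $G(\overline{\mathsf{C}_2[m]})$, the sons of $S$ in the new tree are exactly those ambient sons $S\setminus\{x\}$ that happen to be elementary, i.e.\ those for which $\F(S\setminus\{x\}) < 2\,\m(S\setminus\{x\}) = 2m$. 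As $\F(S\setminus\{x\})=x$, this condition reads $x<2m$, i.e.\ $x\le 2m-1$. Combining with the ambient constraint $x\ge\F(S)+2$ gives precisely the set $\{S\setminus\{x\}\mid x\in\msg(S),\ \F(S)+2\le x\le 2m-1\}$. (The notation $\F(T)$ in the statement should be read as $\F$ of the son, i.e.\ the removed generator $x$; I would rephrase it as above to avoid ambiguity.)

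There is one routine verification to insert: when $x\le 2m-1$ and $S\setminus\{x\}$ is a semigroup, I should check $m=\m(S\setminus\{x\})$, which is clear since $x>\F(S)\ge m$ forces $x\neq m$ (actually $x>m$), so the multiplicity is unchanged; and that $S\setminus\{x\}$ still has concentration $\le 2$ and is elementary — concentration comes from Proposition \ref{8}, and elementariness from $\F(S\setminus\{x\})=x\le 2m-1<2m$. I do not anticipate a genuine obstacle here: the only mild subtlety is making sure the Frobenius number of the son really equals the removed generator $x$ (so that the cutoff $x\le 2m-1$ is equivalent to the son being elementary), but that is already guaranteed by Proposition \ref{8}. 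The proof is therefore a short assembly of Lemma \ref{23}, Proposition \ref{8}, and Lemma \ref{22}, with the elementariness condition translated into the single inequality $x\le 2m-1$.
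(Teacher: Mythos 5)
Your proof is correct and follows exactly the route the paper intends: the paper gives no written proof beyond declaring Proposition \ref{24} a consequence of Lemma \ref{23} and Proposition \ref{8}, and your assembly (tree structure from the iterated sequence, sons from Proposition \ref{8} intersected with the elementariness cutoff $x\le 2m-1$ via $\F(S\backslash\{x\})=x$) is precisely that argument, filled in. One small caveat: the $\F(T)$ in the statement is a typo for $\F(S)$ (the parent), and your parenthetical gloss reading it as ``$\F$ of the son'' would literally give the vacuous condition $x+2\le x$ --- but your actual argument correctly uses $\F(S)+2\le x$, so nothing is affected.
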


\begin{example}\label{25}
Let us construct  the tree  $G(\overline {\mathsf{EC}_2[4]})$. 

\vspace{0.5cm}
\begin{tikzpicture}[line cap=round,line join=round,>=triangle 45,xscale=0.75,yscale=0.75]
\clip(0,-5) rectangle (15,2);

\draw [<-]  (8.9,1.3) -- (6,0.1);
\draw [<-]  (9,1.29) -- (12.5,0.2);
\draw [<-]  (9,1.2) -- (9,0);
\draw [<-]  (6,-1) -- (6,-2.9);

\draw (8,2.2) node[anchor=north west] {$\langle 4,5,6,7\rangle$};

\draw (5,0) node[anchor=north west] {$\langle 4,6,7,9\rangle$};
\draw (8.1,0) node[anchor=north west] {$\langle 4,5,7\rangle$};
\draw (12,0) node[anchor=north west] {$\langle 4,5,6\rangle$};

\draw (5,-3) node[anchor=north west] {$\langle 4,6,9,11\rangle$};

\draw (10.8,1.5) node[anchor=north west] {$7$};
\draw (7,1.5) node[anchor=north west] {$5$};
\draw (9.1,1) node[anchor=north west] {$6$};
\draw (6.1,-1.6) node[anchor=north west] {$7$};
\end{tikzpicture}
\end{example}

On the same line as the previous section,  we finished this section  by putting two problems :
\begin{enumerate}
\item What is the cardinality of $\mathsf{EC}_2[m]$ if $m$ belongs to $\N\backslash \{0,1\}$?
\item What is the height of the tree $G(\overline {\mathsf{EC}_2[m]})$ if $m$ belongs to $\N\backslash \{0,1\}$?
\end{enumerate}

\section{The Frobenius number}\label{S5}

Our aim in this section is to give an algorithm  to compute  the whole set of numerical semigroups  with concentration $2$  and with fixed  Frobenius number.

\begin{proposition}\label{26}\cite[Lemma $4$  ]{computer}\label{30}
Let $S$ be a numerical semigroup with Frobenius number $F$. Then:
\begin{enumerate}
\item $S$ is irreducible if and only if $S$ is maximal  in the set of all the numerical semigroups with Frobenius number $F$.
\item If $h=\max\left\{x\in \N\backslash S ~|~ F-x\not\in S~\text{and}~x\neq \frac{F}{2}\right\}$, then $S\cup\{h\}$ is a numerical semigroups with Frobenius number $F$.
\item $S$ is irreducible if and only if $\left\{x\in \N\backslash S ~|~ F-x\not\in S~\text{and}~x\neq \frac{F}{2}\right\}=\emptyset$.
\end{enumerate}
\end{proposition}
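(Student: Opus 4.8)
The plan is to reduce all three statements to one combinatorial device. Call $x\in\N\setminus S$ a \emph{special gap} of $S$ if $2x\in S$ and $x+s\in S$ for every $s\in S\setminus\{0\}$, and write $\mathrm{SG}(S)$ for the set of special gaps. Two elementary observations will be recorded first: for $x\in\N\setminus S$ the set $S\cup\{x\}$ is a numerical semigroup exactly when $x\in\mathrm{SG}(S)$ (the submonoid condition only concerns $2x$ and the sums $x+s$), and $\F(S)\in\mathrm{SG}(S)$ always; and, secondly, a one-step chain lemma: if $S\subsetneq T$ are numerical semigroups and $x=\max(T\setminus S)$, then $S\cup\{x\}$ is again a numerical semigroup contained in $T$, so by iteration every $T\supsetneq S$ is obtained from $S$ by adjoining single elements one at a time through numerical semigroups, with the first adjoined element lying in $\mathrm{SG}(S)$.

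Part $(1)$ will then come from the chain of equivalences ``$S$ is maximal among numerical semigroups of Frobenius number $F$'' $\iff\mathrm{SG}(S)=\{F\}\iff$``$S$ is irreducible''. If some $g\in\mathrm{SG}(S)$ has $g\ne F$, then $S\cup\{g\}$ and $S\cup\{F\}$ are numerical semigroups strictly containing $S$, both with Frobenius number $F$, and their intersection is $S$; this witnesses both non-maximality and reducibility. Conversely, if $\mathrm{SG}(S)=\{F\}$ then by the chain lemma every $T\supsetneq S$ contains $S\cup\{F\}$, hence $F\in T$ and $\F(T)\ne F$; this forbids a proper extension with Frobenius number $F$, and since any $S_1,S_2\supsetneq S$ would each contain $F$, it also forbids $S=S_1\cap S_2$, so $S$ is irreducible.

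For part $(2)$ the point is that $X:=\{x\in\N\setminus S \mid F-x\notin S,\ x\ne F/2\}$ is invariant under the involution $x\mapsto F-x$, since each defining condition is symmetric in $x$ and $F-x$. Hence, assuming $X\ne\emptyset$ and $h=\max X$, we get $F-h\in X$, so $F-h\le h$, i.e.\ $h>F/2$; therefore $2h>F=\F(S)$ and $2h\in S$. If some $s\in S\setminus\{0\}$ had $h+s\notin S$, then one checks $F-h-s\notin S$ (else $F-h=(F-h-s)+s\in S$, contradicting $h\in X$), and then either $h+s\ne F/2$, putting $h+s\in X$ with $h+s>h$, or $h+s=F/2$, in which case $F-h=F/2+s\in X$ with $F-h>h$ --- both contradicting $h=\max X$. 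Thus $S\cup\{h\}$ is a submonoid; it is cofinite, and $h\ne F$ (because $F-F=0\in S$ keeps $F$ out of $X$), so $\F(S\cup\{h\})=F$.

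Finally, part $(3)$: if $X=\emptyset$, I will verify $\mathrm{SG}(S)\setminus\{F\}\subseteq X$ --- for $x\in\mathrm{SG}(S)$ with $x\ne F$ one has $F-x\notin S$ (else $x+(F-x)=F\in S$) and $x\ne F/2$ (else $2x=F\notin S$ against $2x\in S$) --- so $\mathrm{SG}(S)=\{F\}$ and $S$ is irreducible by part $(1)$; conversely, if $X\ne\emptyset$ then $S\cup\{\max X\}$ is a numerical semigroup by part $(2)$, so $\max X\in\mathrm{SG}(S)\setminus\{F\}$, $\mathrm{SG}(S)\ne\{F\}$, and $S$ is reducible. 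The main obstacle is part $(2)$: establishing closure of $S\cup\{h\}$ under addition is the load-bearing step, and the clean way through it is exactly the symmetry remark about $X$, which simultaneously forces $h>F/2$ (handling $2h$ for free) and supplies the contradiction in the case analysis for $h+s$.
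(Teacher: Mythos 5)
Your proof is correct. Note that the paper gives no proof of this proposition at all --- it is quoted as \cite[Lemma 4]{computer} --- and your argument via special gaps (gaps $x$ with $2x\in S$ and $x+s\in S$ for all $s\in S\setminus\{0\}$, equivalently the unitary extensions $S\cup\{x\}$) together with the maximal-gap chain lemma is essentially the standard argument from that reference and from the Rosales--García-Sánchez book, so nothing further is needed; the only cosmetic remark is that in part $(2)$ the subcase $h+s=F/2$ is vacuous, since you have already shown $h>F/2$ and $s>0$.
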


The following result has immediate prove.
\begin{lemma}\label{27}
Let $S$ be a numerical semigroup with concentration $2$, $x\in \N\backslash S$, $x\neq \F(S)$ and  $S\cup\{x\}$ is a numerical semigroup, then $S\cup\{x\}$ is a numerical semigroup with concentration $2$ and  Frobenius number $\F(S)$.
\end{lemma}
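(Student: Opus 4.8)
The plan is to verify the three claimed conclusions — that $S\cup\{x\}$ is a numerical semigroup, that it has concentration $2$, and that its Frobenius number equals $\F(S)$ — essentially in that order, exploiting that $S$ already has concentration $2$ via Proposition \ref{1}. The hypothesis already gives that $S\cup\{x\}$ is a numerical semigroup, so the first point needs nothing.

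For the Frobenius number, first I would observe that since $x\in\N\backslash S$ and $x\neq \F(S)$, we must have $x<\F(S)$; indeed $\F(S)=\max(\N\backslash S)$, so any gap of $S$ other than $\F(S)$ lies below it. Removing $x$ from the set of gaps of $S$ therefore leaves $\F(S)$ as the largest element of $\N\backslash(S\cup\{x\})$, because $\N\backslash(S\cup\{x\})=(\N\backslash S)\backslash\{x\}$ still contains $\F(S)$. Hence $\F(S\cup\{x\})=\F(S)$. Note also that this shows $S\cup\{x\}$ is not a half-line, since $x<\F(S)$ means $S\cup\{x\}$ still has a gap, namely $\F(S)$, above its multiplicity (the multiplicity of $S\cup\{x\}$ is at most $\m(S)$, and $\F(S)>\m(S)$ because $S$ is not a half-line as it has concentration $2$).

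For the concentration, I would use the characterization in part $(2)$ of Proposition \ref{1}: a numerical semigroup $T$ that is not a half-line has $\mathsf{C}(T)=2$ iff $h+1\in T$ for every $h\in\N\backslash T$ with $h>\m(T)$. Take any $h\in\N\backslash(S\cup\{x\})$ with $h>\m(S\cup\{x\})$. Since $\m(S\cup\{x\})\le \m(S)$, we certainly have $h$ a gap of $S$; but I must be careful that $h>\m(S)$ as well, in case $x$ lies strictly between $\m(S\cup\{x\})$ and $\m(S)$. In fact $x$ is the only element possibly added below $\m(S)$, and if $x<\m(S)$ then $x=\m(S\cup\{x\})$ is the new multiplicity, so any $h>\m(S\cup\{x\})=x$ that is a gap of $S$ satisfies $h\ge x+1$; combined with $h\neq x$ and $h$ a gap of $S$, and using that $S$ has concentration $2$, one gets $h+1\in S\subseteq S\cup\{x\}$ as soon as $h>\m(S)$, and the remaining small range $x<h\le \m(S)$ must be checked directly — but a gap $h$ of $S$ with $h\le \m(S)$ forces $h<\m(S)$, and then $h+1\le\m(S)$; since $S$ has concentration $2$ and is not a half-line, actually $\m(S)$ and either $\m(S)+1$ or $\m(S)+2$ are the relevant constraints, but gaps below $\m(S)$ in $S$ all satisfy $h+1\in S$ only if $h+1=\m(S)$ or $h+1\in S$. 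The cleanest route is: gaps of $S$ below $\m(S)$ are exactly $\{1,\dots,\m(S)-1\}$, and for such $h$ we have $h+1\in S$ only when $h=\m(S)-1$; for $h<\m(S)-1$ we instead note $h+1$ is still a gap, so these $h$ are never $>\m(S\cup\{x\})$ unless $x$ is very small — so I would simply argue that if $x<\m(S)$ then $\{0,x,\m(S),\dots\}\subseteq S\cup\{x\}$ and the gaps of $S\cup\{x\}$ above its multiplicity $x$ are $\{h : x<h<\m(S)\}\cup(\N\backslash S \cap (\m(S),\infty))\backslash\{x\}$, and one checks the concentration-$2$ condition fails unless $x=\m(S)-1$ or $x=\m(S)-2$...

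On reflection, the safe and short argument avoids all of this: since $S\cup\{x\}$ is a numerical semigroup with $\F(S\cup\{x\})=\F(S)$ and $S\subseteq S\cup\{x\}$, every $h\in\N\backslash(S\cup\{x\})$ with $h>\m(S)$ is a gap of $S$ with $h>\m(S)$, hence $h+1\in S\subseteq S\cup\{x\}$ by Proposition \ref{1}(2) applied to $S$. This handles all gaps above $\m(S)$. The only possibly problematic gaps $h$ of $S\cup\{x\}$ with $\m(S\cup\{x\})<h\le\m(S)$ occur when $x<\m(S)$, i.e. $x=\m(S\cup\{x\})$ is the new multiplicity and these are not required by the criterion since they need $h>\m(S\cup\{x\})=x$; but a gap $h$ with $x<h\le\m(S)$ and $h$ a gap of $S$ would have $h\le\m(S)-1$ (as $\m(S)\in S$), hence $h+1\le\m(S)$, and we need $h+1\in S\cup\{x\}$. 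Here I would invoke that $S$ having concentration $2$ and $S\cup\{x\}$ being a genuine numerical semigroup pins down $x\in\{\m(S)-1,\m(S)-2\}$ when $x<\m(S)$: if $x\le\m(S)-3$ then $2x\le 2\m(S)-6<\m(S)$... this needs $\m(S)>6$, so that is not clean either. I expect this small-multiplicity bookkeeping to be the only real obstacle; I would resolve it by treating $x<\m(S)$ as a separate short case, noting $x\in S\cup\{x\}$ forces $2x,3x,\dots\in S\cup\{x\}$, and since $S\supseteq\{m(S),\to\}\setminus(\text{gaps})$, the consecutive-pair condition $\{h+1,h+2\}\cap(S\cup\{x\})\neq\emptyset$ (Proposition \ref{1}(3)) is inherited for all new elements generated, completing the verification that $\mathsf{C}(S\cup\{x\})=2$.
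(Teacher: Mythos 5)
Your treatment of the Frobenius number and of the large gaps is correct: since $x\in\N\backslash S$ and $x\neq \F(S)$ force $x<\F(S)$, the largest gap is unchanged, and any gap $h$ of $S\cup\{x\}$ with $h>\m(S)$ is a gap of $S$, so $h+1\in S$ by Proposition \ref{1}(2). Equivalently, for every $s\in S\backslash\{0\}$ one has $\text{next}_{S\cup\{x\}}(s)-s\le \text{next}_S(s)-s\le 2$, so the whole lemma reduces to showing $\{x+1,x+2\}\cap S\neq\emptyset$ for the single new element $s=x$. You correctly isolate this as the only problematic case (it can only fail when $x\le \m(S)-3$), but none of your attempts to close it works, and you half-admit as much: the claim that $\mathsf{C}(S)=2$ together with $S\cup\{x\}$ being a semigroup ``pins down $x\in\{\m(S)-1,\m(S)-2\}$'' is false ($2x\in S\cup\{x\}$ only forces $x\ge \m(S)/2$), and the closing sentence that the consecutive-pair condition ``is inherited for all new elements generated'' is an assertion, not an argument --- Proposition \ref{1}(3) has to be verified at $x$ itself, which is precisely what is in question.

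This gap cannot be filled, because the statement is false in exactly that case. Take $S=\{0,7,8\}\cup\{10,\rightarrow\}=\langle 7,8,10,11,12,13\rangle$, which has $\mathsf{C}(S)=2$ and $\F(S)=9$, and let $x=4$. Then $x\in\N\backslash S$, $x\neq\F(S)$, and $S\cup\{4\}$ is a numerical semigroup (indeed $4+4=8$, $4+7=11$, $4+8=12$, and $4+s\ge 14$ for $s\ge 10$ all lie in $S$), yet $\text{next}_{S\cup\{4\}}(4)=7$, so $\mathsf{C}(S\cup\{4\})=3$; only the Frobenius-number conclusion survives. For what it is worth, the paper offers no proof either (the lemma is declared to have an ``immediate'' proof), and the statement needs the additional hypothesis $\{x+1,x+2\}\cap S\neq\emptyset$ (equivalently, $x>\m(S)$ or $x\ge\m(S)-2$), which would then have to be verified separately for the element $x=\alpha(S)$ used in Section 5. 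So your reduction is the right one, but the remaining obstacle is genuine and the proof is incomplete.
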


Given  $F\in \N \backslash\{0,1\}$, we denote by 
 \[\mathsf{C}_2(F)=\{ S ~|~S \text{ is a numerical semigroup}, \mathsf{C}(S)=2 ~\text{and}~ \F(S)=F\}.\]

Let $S$ be  non-irreducible numerical semigroup. Denote by  
\[\alpha (S)=\max\left\{x\in \N\backslash S ~|~ \F(S)-x\not\in S~\text{and}~x\neq \frac{\F(S)}{2}\right\}.\]
As a consequence of Lemma \ref{27} and $2)$ of Proposition \ref{26}, we can define recurrently the following sequence of elements of $\mathsf{C}_2(F)$:

\begin{itemize}
\item $S_0=S$,
\item $S_{n+1}=\left\{ \begin{array}{ll}
  S_n\cup\{\alpha(S_n)\} & \hbox{if } S_n ~ \hbox{is non-irreducible } \\
  S_n & \hbox{otherwise}.
    \end{array}\right.$
\end{itemize}
 
Taking into account the previous results  the next result it easy to prove.

\begin{proposition}\label{28}
Let $F\in \N \backslash\{0,1\}$, $S\in \mathsf{C}_2(F)$  and let $\left\{ S_n ~|~n\in\N\right\}$ be the previous sequence. Then there exists a positive integer $k$ such that $S_k$ is an irreducible numerical semigroup.
\end{proposition}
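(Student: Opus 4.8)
The plan is to show that the sequence $\{S_n\}$ is eventually constant and that its stable value is irreducible. First I would observe that the sequence is non-decreasing with respect to inclusion: indeed, by construction $S_n \subseteq S_{n+1}$, and in fact $S_n = S_{n+1}$ precisely when $S_n$ is irreducible (once we reach an irreducible semigroup, the recursion fixes it forever). So it suffices to prove that the sequence cannot strictly increase forever. This follows immediately from finiteness: all the $S_n$ are numerical semigroups contained in $\N$ and containing $S_0 = S$, so by Lemma \ref{11} (the set of numerical semigroups containing a fixed one is finite) the chain $S_0 \subseteq S_1 \subseteq \cdots$ stabilizes. Hence there exists $k$ with $S_k = S_{k+1} = S_{k+2} = \cdots$.

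Next I would argue that this stable value $S_k$ is irreducible. Suppose not. Then by part $2)$ of Proposition \ref{26}, the quantity $\alpha(S_k) = \max\left\{x\in \N\backslash S_k ~|~ F-x\not\in S_k ~\text{and}~ x\neq \tfrac{F}{2}\right\}$ is well defined (the set is nonempty by part $3)$ of Proposition \ref{26}, since $S_k$ is non-irreducible), and $S_k \cup \{\alpha(S_k)\}$ is a numerical semigroup with Frobenius number $F$ strictly larger than $S_k$. But by the definition of the recursion, $S_{k+1} = S_k \cup \{\alpha(S_k)\} \supsetneq S_k$, contradicting $S_{k+1} = S_k$. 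Therefore $S_k$ must be irreducible.

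It remains to check that every $S_n$ actually lies in $\mathsf{C}_2(F)$, so that the recursion is well posed and the terminal $S_k$ is the desired object; this is where Lemma \ref{27} enters. We proceed by induction on $n$. The base case $S_0 = S \in \mathsf{C}_2(F)$ is the hypothesis. For the inductive step, if $S_n$ is irreducible then $S_{n+1} = S_n \in \mathsf{C}_2(F)$ trivially. If $S_n$ is non-irreducible, then $x := \alpha(S_n)$ satisfies $x \in \N\backslash S_n$ and $x \neq F$ (since $F - x \notin S_n$ would fail for $x = F$, as $F - F = 0 \in S_n$; alternatively $x \le F$ always and $x = F$ is excluded because $0 \in S_n$), and $S_n \cup \{x\}$ is a numerical semigroup by part $2)$ of Proposition \ref{26}; hence by Lemma \ref{27} applied to $S_n$, the semigroup $S_{n+1} = S_n \cup \{x\}$ has concentration $2$ and Frobenius number $F$, i.e. $S_{n+1} \in \mathsf{C}_2(F)$. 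This closes the induction, and combining the three parts gives the statement.

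I do not anticipate a serious obstacle here: the argument is a standard "ascending chain in a finite poset terminates" combined with the characterization of irreducibility in Proposition \ref{26}. The only point requiring a little care is the well-posedness check — making sure $\alpha(S_n)$ is defined whenever we invoke it (which is exactly part $3)$ of Proposition \ref{26}) and that adding it keeps us inside $\mathsf{C}_2(F)$ (Lemma \ref{27}), so that the whole sequence stays in the finite set of numerical semigroups containing $S$. Once that bookkeeping is in place the result is immediate.
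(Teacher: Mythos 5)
Your proof is correct and follows essentially the route the paper intends (the paper omits the proof, but explicitly sets up the sequence "as a consequence of Lemma \ref{27} and $2)$ of Proposition \ref{26}", which is exactly the machinery you use). The only cosmetic remark is that termination also follows even more directly from the finiteness of $\N\backslash S$, since each nontrivial step adds one element of that finite set; your appeal to Lemma \ref{11} is equally valid.
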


We will call $S_k$ the  irreducible numerical semigroup associated to $S$ and it will be denoted by $\mathscr V(S)$.

We define the following equivalence relation over $\mathsf{C}_2(F)$:
\[S \sim  T  ~~ \text{if and only if} ~~ \mathscr V(S)  =\mathscr V(T).\] 
 We denote the equivalence class of $S\in \mathsf{C}_2(F)$ modulo $\sim$ by $[S]=\left\{T\in \mathsf{C}_2(F) ~|~ S \sim  T \right\}$ and the quotient set  
 $\mathsf{C}_2(F)/\! \sim\,=\left\{[S] ~|~ S\in \mathsf{C}_2(F) \right\}$. 

 Denote by $\mathcal I\big(\mathsf{C}_2(F)\big)=\left\{S\in \mathsf{C}_2(F) ~|~ S ~\text{is irreducible}\right\}$.

As a consequence of Proposition \ref{28} we have the following result.

\begin{theorem}\label{29}
  If $F\in \N \backslash\{0,1\}$,  then  the quotient set  $\mathsf{C}_2(F)/\! \sim\,=\left\{[S] ~|~ S\in \mathcal I\big(\mathsf{C}_2(F)\big)\right\}$. Moreover, if $\{S, T\}\subseteq \mathcal I\big(\mathsf{C}_2(F)\big)$ and  $S\neq T$  then $[S]\cap[T]=\emptyset$.
\end{theorem}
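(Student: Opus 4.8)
The plan is to derive both assertions directly from Proposition \ref{28} together with the construction of $\mathscr{V}(S)$. First I would establish that $\mathscr{V}$ is well defined on all of $\mathsf{C}_2(F)$: given $S\in\mathsf{C}_2(F)$, Proposition \ref{28} guarantees a positive integer $k$ with $S_k$ irreducible; since the sequence $\{S_n\}$ stabilizes once it reaches an irreducible semigroup (because then $S_{n+1}=S_n$), the terminal semigroup is unique and equals $S_k$ for every sufficiently large index, so $\mathscr{V}(S):=S_k$ is unambiguous. By Lemma \ref{27} and part (2) of Proposition \ref{26}, every $S_n$ lies in $\mathsf{C}_2(F)$, hence $\mathscr{V}(S)\in\mathsf{C}_2(F)$, and moreover $\mathscr{V}(S)$ is irreducible, so $\mathscr{V}(S)\in\mathcal{I}\big(\mathsf{C}_2(F)\big)$.

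Next I would verify that $\sim$ is genuinely an equivalence relation — reflexivity, symmetry and transitivity are immediate since $S\sim T$ is defined by the equality $\mathscr{V}(S)=\mathscr{V}(T)$ of elements of $\mathsf{C}_2(F)$ — and then identify the classes. For the inclusion $\left\{[S]\mid S\in\mathcal{I}\big(\mathsf{C}_2(F)\big)\right\}\subseteq \mathsf{C}_2(F)/\!\sim$ there is nothing to prove. For the reverse inclusion, take an arbitrary class $[S]$ with $S\in\mathsf{C}_2(F)$ and set $T=\mathscr{V}(S)\in\mathcal{I}\big(\mathsf{C}_2(F)\big)$; the key observation is that $\mathscr{V}(T)=T$, because $T$ is already irreducible so its stabilizing sequence is constant. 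Hence $\mathscr{V}(S)=T=\mathscr{V}(T)$, which gives $S\sim T$ and therefore $[S]=[T]$ with $T\in\mathcal{I}\big(\mathsf{C}_2(F)\big)$. This proves the first assertion.

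For the second assertion, suppose $S,T\in\mathcal{I}\big(\mathsf{C}_2(F)\big)$ and $[S]\cap[T]\neq\emptyset$, say $U\in[S]\cap[T]$. Then $\mathscr{V}(U)=\mathscr{V}(S)$ and $\mathscr{V}(U)=\mathscr{V}(T)$, so $\mathscr{V}(S)=\mathscr{V}(T)$. Using again that $S$ and $T$ are irreducible, their stabilizing sequences are constant, so $\mathscr{V}(S)=S$ and $\mathscr{V}(T)=T$; therefore $S=T$, i.e. the classes of distinct irreducible semigroups are disjoint. The only genuinely substantive input here is Proposition \ref{28} (termination of the sequence) and the termination/stabilization behaviour of $\mathscr{V}$ on irreducible semigroups; once those are in hand the theorem is a short formal argument, so I expect the main obstacle to be purely bookkeeping — namely being careful that the recursively defined sequence really does stabilize at the first irreducible term and that $\mathscr{V}$ restricted to $\mathcal{I}\big(\mathsf{C}_2(F)\big)$ is the identity.
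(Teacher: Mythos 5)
Your proposal is correct and is essentially the argument the paper intends: the paper states Theorem \ref{29} simply as "a consequence of Proposition \ref{28}," and your write-up is the natural formalization — $\mathscr{V}$ is well defined by termination and stabilization of the sequence, it lands in $\mathcal I\big(\mathsf{C}_2(F)\big)$, it fixes irreducible semigroups, and the classes are its fibers. No discrepancy with the paper's route.
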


In view of Theorem \ref{29}, in order to determine explicitly the elements in the set $\mathsf{C}_2(F)$ we need:
\begin{itemize}
\item[1)] an algorithm to compute the set $\mathcal I\big(\mathsf{C}_2(F)\big)$;
\item[2)] an algorithm to compute the class $[S]$, for each  $S\in \mathcal I\big(\mathsf{C}_2(F)\big)$.
\end{itemize}

In \cite{forum} it is given an efficient procedure to compute the set of irreducible numerical semigroups with Frobenius number $F$. Using Proposition \ref{1}, we obtain that a numerical semigroup is or is not of  concentration $2$. Therefore we have solved 1).

Now we will focus on solving 2).
Let $\bigtriangleup\in \mathcal I\big(\mathsf{C}_2(F)\big)$. We define the graph $G([\bigtriangleup])$ whose vertices are the elements of $[\bigtriangleup]$ and $(S,T) \in [\bigtriangleup] \times [\bigtriangleup]$ is an edge if and only if  $T=S\cup \{\alpha(S)\}$.

By definition, when $S$ is irreducible we say that $\alpha (S)=+\infty$, because in this case  $\alpha (S)$ does not exist.

\begin{proposition}\label{30} 
 If $F\in \N \backslash\{0,1\}$  and $\bigtriangleup\in \mathcal I\big(\mathsf{C}_2(F)\big)$, then $G([\bigtriangleup])$  is a tree rooted in $\bigtriangleup$. 
 Moreover,  the set of sons of vertex $T$ is equal to  

$\left\{T\backslash \{x\}~|~x\in \msg(T), ~ \frac{F}{2}<x<F, ~ \alpha (T)<x ~\text{and}~ \right.\\
  {} \hspace{6cm} \left. \{x-1,x+1\}\subseteq T  ~\text{or}~ x=\m(T)\right\}$.
\end{proposition}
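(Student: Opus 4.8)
\textbf{Proof plan for Proposition \ref{30}.}

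The plan is to verify separately the two assertions: that $G([\bigtriangleup])$ is a tree rooted at $\bigtriangleup$, and that the sons of a vertex $T$ are exactly the sets described. For the tree structure, the argument mirrors the one used for $G(\overline{\mathsf{C}_2[m]})$ in Theorem \ref{6}. Every vertex $S$ of $G([\bigtriangleup])$ is a numerical semigroup in $\mathsf{C}_2(F)$ with $\mathscr V(S)=\bigtriangleup$; starting from $S$ and iterating the map $S\mapsto S\cup\{\alpha(S)\}$ (when $S$ is non-irreducible), Proposition \ref{28} guarantees the process reaches an irreducible semigroup after finitely many steps, and by definition that semigroup is $\mathscr V(S)=\bigtriangleup$. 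Hence from every vertex there is a path to $\bigtriangleup$, so $G([\bigtriangleup])$ is a tree rooted at $\bigtriangleup$, provided one also checks that the edge-relation is a genuine ``parent'' relation, i.e. that each non-root vertex has a unique parent: indeed if $T=S\cup\{\alpha(S)\}$ then $S$ is recovered from $T$ as $T\backslash\{\alpha(S)\}$ and $\alpha(S)=\F(S\backslash\{\alpha(S)\})$... more carefully, one uses that the parent of $T$ is $T\backslash\{y\}$ where $y$ is forced by the definition of $\alpha$, so uniqueness holds. This part is routine once Proposition \ref{28} is invoked.

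The substance is the characterization of the sons. Let $T$ be a vertex. A son of $T$ is a semigroup $S\in[\bigtriangleup]$ with $T=S\cup\{\alpha(S)\}$, equivalently $S=T\backslash\{x\}$ where $x=\alpha(S)$. First, by Lemma \ref{7}, $S=T\backslash\{x\}$ is a numerical semigroup iff $x\in\msg(T)$, and then $\F(S)=x$ only if $x>F$; but we need $\F(S)=F$, so in fact $x<F$ is required together with... wait, here is the key point: removing $x$ from $T$ must \emph{not} change the Frobenius number, which forces $x<F$; combined with the requirement that the parent operation sends $S$ back to $T$ via $\alpha$, we get $x=\alpha(S)$, and the defining condition $x\neq\frac{\F(S)}{2}=\frac F2$ together with $\F(S)-x\notin S$ must be analyzed. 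The condition $x>\frac F2$ comes from $x\neq\frac F2$ plus the observation that if $x<\frac F2$ then $F-x>\frac F2$ and one shows $F-x$ would have to lie in $S$, contradicting $F-x\notin S$ — or more precisely, the maximality built into $\alpha$ forces the largest such gap, which lies above $\frac F2$. The condition $\alpha(T)<x$ ensures that when we go back up from $S$ to $T$ we do not ``overshoot'': $\alpha(S)$ should equal $x$, and since $\alpha$ picks the maximum admissible gap, we need $x$ to exceed $\alpha(T)$ so that $T$ is obtained by filling $x$ and not some larger gap. Finally the concentration-$2$ constraint: $S=T\backslash\{x\}$ must have $\mathsf{C}(S)=2$, and by Proposition \ref{1}(3) this is governed by whether $\{s+1,s+2\}\cap S\neq\emptyset$; the only place this can fail after removing $x$ is around $x$ itself, which is why we need $\{x-1,x+1\}\subseteq T$ (so that $x-1$ and $x+1$ survive in $S$ and witness the property for $x-2$ and $x-1$), \emph{or} $x=\m(T)$, in which case $x-1,x-2$ need not be considered because they are below the multiplicity. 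I would check each of these equivalences carefully, using Lemma \ref{27} for the ``only if'' direction (removing an admissible $x$ keeps us in $\mathsf{C}_2(F)$) and the conditions listed for the ``if'' direction.

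I expect the main obstacle to be the concentration-$2$ bookkeeping around $x$: one must argue precisely that $S=T\backslash\{x\}$ still has concentration exactly $2$ (not larger, and not dropping to $1$, the latter being impossible since $S$ is not a half-line for $F\geq 2$), and that the disjunction ``$\{x-1,x+1\}\subseteq T$ or $x=\m(T)$'' is exactly the right condition. The subtlety is that removing $x$ only threatens the gap-condition at the elements $x-2$ and $x-1$ of $S$ (their successor in $S$ could jump past $2$ if $x$ was the unique element in $\{(x-2)+1,(x-2)+2\}\cap S$ or in $\{(x-1)+1,(x-1)+2\}\cap S$), so one needs $x-1\in S$ or $x+1\in S$ appropriately — and when $x=\m(T)$ there is simply no element of $S\setminus\{0\}$ below $x$, so nothing to check. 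Assembling the four conditions ($x\in\msg(T)$, $\frac F2<x<F$, $\alpha(T)<x$, and the concentration clause) and showing they are jointly necessary and sufficient for $T\backslash\{x\}$ to be a son of $T$ is the crux; everything else follows from Lemma \ref{7}, Lemma \ref{27}, Proposition \ref{26}, and Proposition \ref{28}.
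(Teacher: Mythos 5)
Your plan follows essentially the same route as the paper's proof: Lemma \ref{7} to get $x\in\msg(T)$, the symmetry/maximality of the set defining $\alpha$ to get $\frac F2<x<F$, the condition $\alpha(T)<x$ to ensure $\alpha(T\backslash\{x\})=x$ so that $T$ is recovered as the parent, and the local analysis at $x-1$ and $x-2$ for the clause ``$\{x-1,x+1\}\subseteq T$ or $x=\m(T)$''; indeed you spell out several steps that the paper dispatches with ``it is clear that''. The one quibble is your parenthetical claim that $S=T\backslash\{x\}$ can never drop to concentration $1$: when $x=\m(T)$ and $T=\{0,x\}\cup\{x+2,\rightarrow\}$ the set $T\backslash\{x\}$ is a half-line, so this edge case does need excluding --- but it is equally unaddressed in the paper's own proof.
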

\begin{proof}
If $S$ is a son $T$, then $T=S\cup \{\alpha(S)\}$ and thus $S=T\backslash\{\alpha (S)\}$. By Lemma \ref{7}, we have that $\alpha (S)\in \msg(T)$. It is clear that  $\frac{F}{2}<\alpha(S)<F$ and $\alpha(S)=\m(T)$ or $\left\{\alpha(S)-1,\alpha(S)+1\right\}\subseteq T$. Also we have that $\alpha(T)<\alpha(S)$.

Conversely, if $x\in\msg(T)$, $\frac{F}{2}<x<F$ and  $\left\{x-1,x+1\right\}\subseteq T$ or $x=\m(T)$ then $T\backslash\{x\}\in \mathsf{C}_2(F)$.  If $\alpha(T)<x$ then $\alpha(T\backslash \{x\})=x$. Hence $T=\left(T\backslash \{x\}\right)\cup \left(\alpha(T\backslash \{x\}\right)$ and so $T\backslash \{x\}$ is a son of $T$.
\end{proof}

\begin{example}\label{31}
 Applying Proposition \ref{26}, we have that  $\bigtriangleup=\langle 5,6,7,8\rangle\in \mathcal I(\mathsf{C}_2(9))\big)$. Now by applying Proposition \ref{30}, let us construct $G([\bigtriangleup])$.

\begin{tikzpicture}[line cap=round,line join=round,>=triangle 45,xscale=0.75,yscale=0.75]
\clip(0,-5) rectangle (17,5);

\draw [<-] (8.8,4.1) -- (5.8,2.5);
\draw [<-] (9.2,4.1) -- (9.2,2.5);
\draw [<-] (9.5,4.1) -- (12.2,2.5);

\draw [<-] (5.3,1.4) -- (7.5,0);
\draw [<-] (5,1.4) -- (4,0);

\draw [<-] (4,-1) -- (4.1,-3);

\draw (7.8,5) node[anchor=north west] {$\langle 5, 6, 7, 8\rangle$};
\draw (3,2.2) node[anchor=north west] {$\langle 6,7,8,10,11\rangle$};
\draw (8.2,2.2) node[anchor=north west] {$\langle 5,7,8,11\rangle$};
\draw (12,2.2) node[anchor=north west] {$\langle 5,6,8\rangle$};
\draw (2,0) node[anchor=north west] {$\langle 7,8,10,11,12,13\rangle$};
\draw (7,0) node[anchor=north west] {$\langle 6,8,10,11,13,15\rangle$};
\draw (2,-3) node[anchor=north west] {$\langle 8,10,11,12,13,14,15,17\rangle$};

\draw (5.8,4) node[anchor=north west] {$5$};
\draw (8,3.7) node[anchor=north west] {$6$};
\draw (10.7,4) node[anchor=north west] {$7$};

\draw (3.4,1.1) node[anchor=north west] {$6$};
\draw (6.4,1.1) node[anchor=north west] {$7$};

\draw (4.3,-1.6) node[anchor=north west] {$7$};
\end{tikzpicture}
\end{example}
 The numbers that appears on either side of the edges is the elements that we remove from the semigroup to obtain its son.


\end{document}